\tikzstyle{vertex}=[circle,fill=black!25,minimum size=20pt,inner sep=0pt,draw]
\theoremstyle{definition}
\newtheorem{definition}{Definition}
\newtheorem{observation}[definition]{Observation}
\newtheorem{remark}[definition]{Remark}
\newtheorem{proposition}[definition]{Proposition}
\newtheorem{assumption}{Assumption}
\newtheorem{problem}[definition]{Problem}
\theoremstyle{plain}
\newtheorem{theorem}[definition]{Theorem}
\newtheorem{corollary}[definition]{Corollary}
\newtheorem{lemma}[definition]{Lemma}
\newcommand{\nonRobust}{$q$-Multiset Multicover}
\newcommand{\nonRobustShort}{$q$-MSMC}
\newcommand{\nonRobustOpt}{Min-$q$-Multiset Multicover}
\newcommand{\nonRobustOptShort}{Min-$q$-MSMC}
\newcommand{\RobustShort}{Robust $q$-MSMC}
\newcommand{\RobustOpt}{Robust Min-$q$-Multiset Multicover}
\newcommand{\RobustOptShort}{Robust Min-$q$-MSMC}
\newcommand{\RobustOptShortT}{Robust Min-$3$-MSMC}
\newcommand{\bx}{\bar{x}}
\newcommand{\Z}{\mathbb{Z}}
\date{\today}
\begin{document}
\normalem 

\title{Robust Multicovers with Budgeted Uncertainty}
\author{Sven O.~Krumke}
\author[1]{Eva Schmidt}
\author[1]{Manuel Streicher}
\affil[1]{Department of Mathematics, Technische Universit\"at Kaiserslautern, Germany}

\setcounter{Maxaffil}{0}
\renewcommand\Affilfont{\itshape\small}

\maketitle

\begin{abstract}
	The Min-$q$-Multiset Multicover problem presented in this
	paper is a special version of the Multiset Multicover problem. For a
	fixed positive integer~$q$, we are given a finite ground set~$J$, an
	integral demand for each element in~$J$ and a collection
	of subsets of~$J$. The task is to choose
	sets of the collection (multiple choices are allowed) such that
	each element in~$J$ is covered at least as many times as specified by the demand
	of the element. In contrast to Multiset Multicover, in Min-$q$-Multiset Multicover each of
	the chosen subsets may only cover up to~$q$ of its elements with multiple choices being allowed.
	
	Our main focus is a robust version of
	Min-$q$-Multiset Multi\-cover, called Robust Min-$q$-Multiset Multicover, in which the demand of
	each element in $J$ may vary in a given interval with an
	additional budget constraint bounding the sum of the demands.
	Again, the task is to find a selection of subsets
		which is feasible for all admissible demands.
	
	We show that the non-robust version is NP-complete for $q$ greater than two, whereas the robust version is strongly NP-hard for any positive $q$.
	Furthermore, we present two solution approaches based on constraint generation and investigate
	the corresponding separation problems.
	
	We present computational results using randomly generated
	instances as well as instances emerging from the problem of
	locating emergency doctors.
\end{abstract}
\let\thefootnote\relax\footnotetext{\textsuperscript{\textcopyright}2018. This manuscript version is made available under the CC-BY-NC-ND 4.0 license, \url{http://creativecommons.org/licenses/by-nc-nd/4.0/}}

\section{Introduction}
\label{sec: Introduction}
Covering problems arise in many real world applications.  Therefore, there has been a lot of research regarding this area of optimization.  
In the classical \emph{Set Cover} problem we are given a set $U$, a collection of subsets $\mathcal{S}\subseteq 2^U$ and a positive integer~$k$.  
The decision version asks for the existence of a subcollection $\mathcal{S}'\subseteq\mathcal{S}$ of size at most $k$, such that, for all $u\in U$, there is some $S\in\mathcal{S}'$ with $u\in S$.  
This problem is well-known to be strongly NP-complete, see~\cite{Garey}.  
In the \emph{Set Multicover} problem each element $u\in U$ is given a demand $d_u\in\mathbb{Z}_{\geq 0}$ expressing the number of times the element $u$ has to be covered. 
Finally, considering the \emph{Multiset Multicover} problem, the subsets in the collection may be multisets, cf.~\cite{hua}.  
These two variants are clearly generalizations of \emph{Set Cover} and therefore remain strongly NP-complete.  
Furthermore, the problems above remain strongly NP-complete if only subsets of a fixed size $q\geq 3$ are regarded, cf.~\cite{Garey}.

The notion of robustness has gained a lot of attention in operations research. 
The core idea of robust optimization can be summarized as follows: At the time of computation not all data of the instance may be known exactly.  Instead of fixed parameters we are given a \emph{set of scenarios}~$\mathcal{U}$, the \emph{uncertainty set}, where each scenario defines $n$ fixed parameters for some $n\in\mathbb{N}$. 
We assume that any of the scenarios contained in $\mathcal{U}$ may actually occur, but we do not know the \emph{true} scenario in advance. The aim is to find a solution taking into account all scenarios.  
The work was pioneered in~\cite{Soyster:1973} and become a major research area within the optimization community  with~\cite{BenTal+Nemirovsky:1998,BenTal+Nemirovsky:1999,BenTal+Nemirovsky:2000}. A thorough general introduction and overview of robust optimization can be found in~\cite{ROBook}. Furthermore, a recent overview is given by~\cite{GABREL2014471}.

We denote the scenarios by vectors $\xi\in\mathbb{R}^n$ where each entry corresponds to some parameter of the instance. Several methods of defining uncertainty sets have been proposed in current literature, cf.~\cite{BertsimasPriceOfRobustness, Bertsimas2003, kouvelis1996robust, kasperski2008discrete} for a general overview. One arising concept is that of \emph{discrete uncertainty} where the uncertainty sets may only contain a finite number of possible scenarios, cf.~\cite{kouvelis1996robust,Kasperski+Zielinski:2016}. Further, when considering \emph{interval uncertainty} the uncertainty set can be described as
  \begin{align*}
  \mathcal{U}=\left\{\xi\in\mathbb{R}^n \colon \xi_i\in[a_i,b_i], i=1,\dotsc,n\right\},
\end{align*}
for some $a_i,b_i\in\mathbb{R}$, cf.~\cite{kouvelis1996robust}. In this paper, we investigate \emph{discrete budgeted uncertainty} as a combination of the above concepts, i.e.,\
\begin{align*}
\mathcal{U}=\left\{\xi\in\mathbb{R}^n\colon \xi_i\in[a_i,b_i]\cap\mathbb{Z}, i=1,\dotsc,n \text{ and } \sum_{i=1}^n \xi_i\leq\Gamma\right\},
\end{align*}
for some $a_i,b_i,\Gamma\in\mathbb{Z}$, cf.~\cite{BertsimasPriceOfRobustness, Bertsimas2003}. Note that this definition of discrete budgeted uncertainty set differs from other settings using the same expression, e.g. \cite{Chassein,bougeret:hal-01345283,Nasrabadi}. Here we bound the total sum of the uncertainty values. In our paper, the scenarios define parameters of the constraints and do not appear in the objective function. We aim for a solution that fulfills the constraints for all possible scenarios.

In~\cite{Gupta+etal:robust-cover}, the authors studied robust versions of the classical \emph{Set Cover} problem where the possible scenarios are given by all demand-subsets of a certain fixed size.  They provide approximation algorithms for robust two-stage problems:\ Some of the sets may be selected in a first stage at lower cost and in a second stage, after the scenario is known, the remaining sets are chosen.  
Further, an approximation algorithm using the online algorithm for \emph{Set Cover}~\cite{Alon+etal:online-setcover,Buchbinder+Naor:online-primal-dual} within an LP-rounding-based algorithm can be found in~\cite{Feige+etal:robust-cover}.  The robust \emph{Set Cover} problem was also studied from a polyhedral point of view in~\cite{Fischetti:cutting-plane-robust}, whereas new formulations for robust \emph{Set Cover} problems were given in~\cite{Lutter+etal:cover}.


In this paper we introduce for fixed $q\in\mathbb{N}$ the problem \nonRobust, which can be located between \emph{Set Cover by $q$-sets} (cf.~\cite[SP2]{Garey} for the version with $q=3$) and \emph{Multiset Multicover}~\cite{Hua+etal:multicover,Rajagopalan+etal:multicover} as we will see later.  In \nonRobust, the subsets have arbitrary size, but the number of elements they may cover is bounded by $q$.  In fact it is closely related to \emph{Multiset Multicover by $q$-sets}.  However, this relation cannot be generalized to the following robust version of this problem constituting the main focus of this paper.

We investigate a robust version of \nonRobust\ with discrete budgeted uncertainty where the scenarios correspond to demand vectors. 
After analyzing the complexity of all introduced problems, we present different solution approaches based on constraint generation and give computational results for both random instances and instances inspired by a real world problem.

To that end, we discuss an application of \nonRobust\ in the healthcare sector which motivated the study of robust multicovers. In the application we are asked to assign emergency doctors to facilities such that occurring emergencies may be handled in a satisfactory manner. The number of emergencies are uncertain and are represented using the proposed discrete budgeted uncertainty set where the total number of occurring emergencies is budgeted to avoid unrealistic situations. This leads to a multicover problem, where the elements are the regions in which the emergencies occur and the sets correspond to the subsets of regions which can be reached within a guaranteed response time from the facility chosen.  

The article is organized as follows: In Section~\ref{sec: Problem Definition and Classifications}, we introduce \nonRobust, present possible integer pro\-gramming for\-mu\-lations and prove NP-completeness for $q\geq 3$. 
Section~\ref{sec: Robust Version} deals with the robust version of \nonRobust. 
We discuss whether the introduced formulations can be transferred and show that the problem is NP-hard for any $q>0$.
Solution approaches are presented in Section~\ref{sec: Solving the Robust Version}, while Section~\ref{sec: Computational Results} displays the corresponding computational results.


\section{Problem definition and classifications}
\label{sec: Problem Definition and Classifications}
Let $G=(V,E)$ be an undirected graph.
We denote by $N_G(v)$ the
neighborhood of $v\in V$ in~$G$, i.e., the set of all vertices adjacent
to~$v$. 
For a subset $S\subseteq V$, $N_G(S)$ is the set
of all nodes adjacent to some node in~$S$.  
For a directed graph
$G=(V,R)$ we indicate by $N^{+}_G(v)$ the set of successors of $v\in V$,
i.e., the set of vertices~$w$ such that there is a directed arc
from~$v$ to~$w$. 
Analogously, by $N^{-}_G(v)$ we denote the set of
predecessors of $v\in V$.  
If the corresponding graph $G$ is clear
from the context, we omit the subscript~$G$.  
Now, we may
formally define \nonRobust\ for a fixed integer $q\in \Z_{>0}$:

\begin{problem}[\nonRobust~(\nonRobustShort)]
\label{prob: MSMC}
\mbox{}\\
\noindent\textbf{Instance:} 
Finite ground set $J$, weights $d_j\in\mathbb{Z}_{\geq 0}$ for all $j\in J$, a collection of subsets $\mathcal{J}\subseteq 2^J$ and a positive integer
$B\in\mathbb{Z}_{>0}$.

\noindent\textbf{Question:}
Are there integers $x_A\in\mathbb{Z}_{\geq 0}$ for $A\in\mathcal{J}$ with $\sum_{A\in\mathcal{J}}x_A\leq B$, 
such that there exist integers $y_{Aj}\in\mathbb{Z}_{\geq 0}$ for $A\in\mathcal{J}$, $j\in J$ 
satisfying
\begin{align*}
\sum_{A\in\mathcal{J}\colon j\in A}y_{Aj}\geq d_j\quad \forall j\in J\quad\text{ and  }\sum_{j\in A} y_{Aj}\leq q\cdot x_A \quad \forall A\in\mathcal{J}?
\end{align*}
\end{problem}
The interpretation of the problem is as described in the introduction: Can we choose $B$ subsets, 
with multiple choices being allowed since $x_A\in \mathbb{Z}_{\geq 0}$, such that the demand of each element is covered, when
each subset may only cover up to $q$ elements (again multiple choices are allowed since $y_{Aj}\in\mathbb{Z}_{\geq 0}$).
For a fixed subset~$A$, the integer $y_{Aj}$ in the problem definition models the amount of demand of element~$j$ covered by the subset~$A$.

\begin{remark}
\label{rem:multiset_multicover}
If, instead of regarding the subsets~$A\in\mathcal{J}$, we regard all multisets
of cardinality $q$ of~$A$, we get an instance of \emph{Multiset Multicover}, raising 
the input size only by a polynomial factor as $q$ is not part of the input. Thereby,
\nonRobustShort\ is in some sense a representation of certain \emph{Multiset Multicover}
instances, having smaller input size. This connection, however, is lost when regarding the
robust version of \nonRobustShort, cf.\ Section~\ref{sec: Robust Version}.
\end{remark}
In the sequel, it will be useful to consider the following alternative 
definition of \nonRobustShort.

\begin{problem}[\nonRobust~(\nonRobustShort) - alternative definition]
\label{prob: MSMC_alt}
\mbox{}\\
\noindent\textbf{Instance:} 
Finite sets $I, J$ with $I\cap J=\emptyset$, weights $d_j\in\mathbb{Z}_{\geq 0}$ for all $j\in J$, a bipartite graph $G=(I\cup J, E)$ and a positive integer
$B\in\mathbb{Z}_{>0}$.

\noindent\textbf{Question:}
Are there integers $x_i\in\mathbb{Z}_{\geq 0}$ for $i\in I$ with $\sum_{i\in I}x_i\leq B$, 
such that there exist integers $y_{ij}\in\mathbb{Z}_{\geq 0}$ for $i\in I$, $j\in J$ 
satisfying
\begin{align*}
\sum_{i\in N(j)}y_{ij}\geq d_j \quad\forall j\in J\quad\text{ and } \sum_{j\in N(i)} y_{ij}\leq q\cdot x_i \quad\forall i\in I?
\end{align*}
\end{problem}
For an instance of \nonRobustShort,\ we call the set $I$
\emph{locations} and the set $J$ \emph{regions}. Further, $d_j$
describes the number of \emph{clients} or the \emph{demand} 
in region $j\in J$ and $x_i$
denotes the number of \emph{suppliers} in location $i\in I$.  The
number $q$ can be interpreted as the number of clients a single
supplier may serve.  In the optimization version \emph{\nonRobustOpt}
(\nonRobustOptShort) we aim for a minimum number of suppliers.
Identifying each location~$i$ with its neighborhood~$N_G(i)$ yields the equivalence
of the two problem definitions.

It can readily be seen that the following integer program models~\nonRobustOptShort\ for 
some demand vector $d\in\mathbb{Z}_{\geq 0}^{|J|}$:
\begin{mini!}
	{x,y}{\sum_{i\in I}x_i}
	{\label{ip:allocation}}
	{\text{(MIP~\ref{ip:allocation})}(d)\quad}
	\addConstraint{\sum_{i\in N(j)}y_{ij}}{\geq d_j\quad}{\forall j\in J}
	\addConstraint{\sum_{j\in N(i)}y_{ij}}{\leq q\cdot x_i\quad}{\forall i\in I}
	\addConstraint{y_{ij}}{\geq 0}{\forall i\in I,~j\in J}
	\addConstraint{x_i}{\in\mathbb{Z}_{\geq 0}\quad}{\forall i\in I.}
\end{mini!}
Note that the variables~$y_{ij}$ are not forced to be integral. 
Observation~\ref{obs: interpretation y} argues why this is no restriction. Furthermore, in Lemma~\ref{lemma:ipequivalence} we prove that 
(IP~\ref{ip:subset})($d$) is an alternative formulation to (MIP~\ref{ip:allocation})($d$).
\begin{mini!}
	{x}{\sum_{i\in I}x_i}
	{\label{ip:subset}}
	{\text{(IP~\ref{ip:subset})}(d)\quad}
	\addConstraint{\sum_{i\in N(S)}q\cdot x_i}{\geq\sum_{j\in S}d_j\quad}{\forall S\subseteq J}
	\addConstraint{x_i}{\in\mathbb{Z}_{\geq 0}}{\forall i\in I.}
\end{mini!}

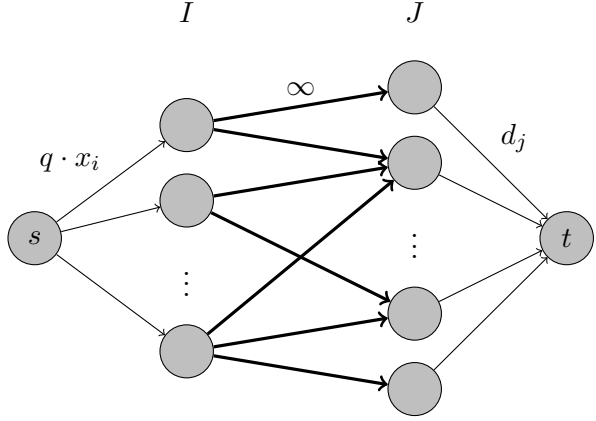
\begin{figure}%
\centering
\begin{tikzpicture}
	\node[vertex] (s) {$s$};
	\node[vertex, right of=s, xshift=1cm, yshift=1.5cm] (i1) {};
	\node[vertex, below of=i1] (i2) {};
	\node[vertex, draw, below of=i2, yshift=-1cm] (i3) {};
	\node[below of=i2] (i4) {$\vdots$};
	\node[vertex, draw, right of=i1, yshift=0.5cm, xshift=2cm] (j1) {};
	\node[vertex, below of=j1] (j2) {};
	\node[below of=j2] (j5) {$\vdots$};
	\node[vertex, below of=j2, yshift=-1cm] (j3) {};
	\node[vertex, below of=j3] (j4) {};
	\node[vertex, right of=s, xshift=6cm] (t) {$t$};
	\node[above of=j1] (J) {$J$};
	\node[left of=J, xshift=-2cm] (I) {$I$};

	\draw[->] (s) -- node[above left] {$q\cdot x_i$} (i1);
	\draw[->] (s) -- (i2);
	\draw[->] (s) -- (i3);
	
	\draw[->,very thick] (i1) -- node[above] {$\infty$} (j1);
	\draw[->,very thick] (i1) -- (j2);
	\draw[->,very thick] (i2) -- (j2);
	\draw[->,very thick] (i2) -- (j3);
	\draw[->,very thick] (i3) -- (j2);
	\draw[->,very thick] (i3) -- (j3);
	\draw[->,very thick] (i3) -- (j4);
	
	\draw[->] (j1) -- node[above right] {$d_j$} (t);
	\draw[->] (j2) -- (t);
	\draw[->] (j3) -- (t);
	\draw[->] (j4) -- (t);
\end{tikzpicture}
\caption{Flow network for the proof of Lemma
  \ref{lemma:ipequivalence}. All thick arcs have infinite capacity.}%
\label{fig:flownetwork}%
\end{figure}

\begin{lemma}
\label{lemma:ipequivalence}
For an instance of \nonRobustOpt\ it holds that $x\in\mathbb{Z}_{\geq{0}}^{|I|}$ is a feasible solution to (IP \ref{ip:subset})($d$) if and only if there exists $y\in\mathbb{R}_{\geq 0}^{|I||J|}$ such that $(x,y)$ is a feasible solution for (MIP~\ref{ip:allocation})($d$). In this case, the variables $y$ can be chosen to be integral.
\end{lemma}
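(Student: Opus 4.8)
The plan is to apply the max-flow/min-cut theorem to the network depicted in Figure~\ref{fig:flownetwork}. Fix $x\in\mathbb{Z}_{\geq 0}^{|I|}$ and form the directed network on node set $\{s,t\}\cup I\cup J$ containing an arc $(s,i)$ of capacity $q\cdot x_i$ for every $i\in I$, an arc $(i,j)$ of infinite capacity for every edge $\{i,j\}\in E$, and an arc $(j,t)$ of capacity $d_j$ for every $j\in J$. First I would observe that, for the fixed $x$, the existence of $y\in\mathbb{R}_{\geq 0}^{|I||J|}$ making $(x,y)$ feasible for (MIP~\ref{ip:allocation})($d$) is equivalent to the existence of an $s$--$t$ flow of value $\sum_{j\in J}d_j$. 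Indeed, reading $y_{ij}$ as the flow on arc $(i,j)$, the constraints $\sum_{j\in N(i)}y_{ij}\leq q x_i$ and $y_{ij}\geq 0$ are exactly the capacity and nonnegativity conditions on the $s$-arcs and middle arcs, while $\sum_{i\in N(j)}y_{ij}\geq d_j$ together with the sink-arc capacity forces each arc $(j,t)$ to be saturated. The one subtlety is that a feasible $y$ may route strictly more than $d_j$ into some region $j$; this is handled by scaling the values $y_{ij}$ down until their sum equals $d_j$, which preserves feasibility and produces a genuine, conserved flow of value $\sum_{j}d_j$. Since the cut whose sink side is $\{t\}$ has capacity $\sum_{j}d_j$, this is also the maximum possible flow value.

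Next I would enumerate the finite-capacity cuts. Writing $S$ for the set of regions on the sink side of a cut, the infinite middle arcs imply that a finite cut cannot separate a source-side location from a sink-side neighbor, so every location in $N(S)$ must lie on the sink side. To minimize the capacity for a given $S$ one places exactly $N(S)$ (and no further locations) on the sink side, giving a cut of capacity $\sum_{i\in N(S)}q x_i+\sum_{j\in J\setminus S}d_j$; minimizing over all cuts therefore amounts to minimizing this expression over $S\subseteq J$. By max-flow/min-cut, the maximum flow equals $\sum_{j\in J}d_j$ if and only if every such cut has capacity at least $\sum_{j\in J}d_j=\sum_{j\in S}d_j+\sum_{j\in J\setminus S}d_j$; cancelling the common term $\sum_{j\in J\setminus S}d_j$ shows this is equivalent to
\[
\sum_{i\in N(S)}q x_i\;\geq\;\sum_{j\in S}d_j\qquad\text{for all }S\subseteq J,
\]
which is precisely the constraint system of (IP~\ref{ip:subset})($d$). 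Chaining the two equivalences proves the biconditional.

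For the integrality claim I would invoke the integral flow theorem: all finite capacities $q x_i$ and $d_j$ are integers, and the infinite capacities may be replaced by any sufficiently large integer (e.g.\ $\sum_{j}d_j$) without altering the maximum flow value, so whenever a flow of value $\sum_{j}d_j$ exists an integral one does as well. Reading off $y_{ij}$ as the integral flow on arc $(i,j)$ then yields an integral $y$ with $(x,y)$ feasible for (MIP~\ref{ip:allocation})($d$).

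I expect the main obstacle to be the careful characterization of the finite cuts in the second step---in particular the argument that the infinite middle arcs force exactly $N(S)$ onto the sink side---together with the clean handling of over-satisfied demands when converting an arbitrary feasible $y$ into a conserved flow.
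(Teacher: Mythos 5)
Your proof is correct, and it rests on the same flow network and Max-Flow-Min-Cut argument as the paper (cf.\ Fig.~\ref{fig:flownetwork}); the genuine divergence is only in how the two directions are organized. The paper proves the direction from (MIP~\ref{ip:allocation})($d$) to (IP~\ref{ip:subset})($d$) by a short direct double-counting estimate, $\sum_{i\in N(S)} q\cdot x_i \geq \sum_{i\in N(S)}\sum_{j\in N(i)\cap S} y_{ij} = \sum_{j\in S}\sum_{i\in N(j)} y_{ij} \geq \sum_{j\in S} d_j$, never touching the network, and invokes the flow construction only for the converse, where it merely \emph{lower-bounds} the capacity of an arbitrary $s$-$t$-cut by $\sum_{j\in J} d_j$ using the subset constraints. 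You instead push both directions through a single chain of equivalences: existence of a feasible $y$ iff max flow equals $\sum_j d_j$ iff every finite cut has capacity at least $\sum_j d_j$ iff the constraints of (IP~\ref{ip:subset})($d$) hold. This requires two ingredients the paper's split avoids, and you supply both correctly: the scaling step converting a $y$ that over-satisfies some demand into a genuinely conserved flow of value $\sum_j d_j$ (setting the flow on $(s,i)$ to $\sum_j y_{ij}\leq q\cdot x_i$), and the exact characterization of the cheapest finite cut for a given region set $S$ (the infinite middle arcs force $N(S)$ onto the sink side, and any further locations only add capacity, so the minimum is $\sum_{i\in N(S)} q\cdot x_i + \sum_{j\in J\setminus S} d_j$, after which the cancellation yields precisely the subset constraints). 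Your version buys a slightly sharper picture — it identifies the minimum cuts exactly rather than only bounding them, making the biconditional transparent in one pass — at the cost of the over-satisfaction subtlety; the paper's asymmetric treatment keeps the easy direction entirely elementary. Your integrality argument (all finite capacities integral, $\infty$ replaceable by the integer $\sum_j d_j$) matches the paper's reasoning in Observation~\ref{obs: interpretation y}.
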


\begin{proof}
If $(x,y)$ is a feasible solution for (MIP~\ref{ip:allocation})($d$) then $x$ is also feasible for (IP~\ref{ip:subset})($d$), as for any $S\subseteq J$ we have: 
\begin{align*}
\sum_{i\in N(S)}q\cdot x_i &\geq \sum_{i\in N(S)} \sum_{j\in N(i)} y_{ij} = \sum_{i\in N(S)} \left(\sum_{j\in N(i)\cap S} y_{ij} + \sum_{j\in N(i)\setminus S} y_{ij}\right)\\
&\geq \sum_{i\in N(S)} \sum_{j\in N(i)\cap S} y_{ij} = \sum_{j\in S} \sum_{i\in N(j)} y_{ij} \geq \sum_{j\in S} d_j.
\end{align*}

Now assume we are given a feasible solution $x$ for (IP~\ref{ip:subset})($d$). 
Let $G$ be the graph from the instance of \nonRobustOptShort. We define a directed graph $H=(I\cup J\cup\{s\}\cup\{t\}, R\cup R_s\cup R_t)$, where $R$ contains all arcs in $E(G)$ directed from $I$ to $J$, $R_s=\{(s,i)\colon i\in I\}$ and $R_t=\{(j,t)\colon j\in J\}$, cf.~Fig.~\ref{fig:flownetwork}. 
We set the capacities of each arc $r\in R(H)$ as
$$ c(r)=\begin{cases} \infty,& r\in R,\\q\cdot x_i,& r\in R_s,\\ d_j, & r\in R_t.\\\end{cases}$$
We claim that the maximum $s$-$t$-flow in $H$ has flow value $\sum_{j\in J}d_j$. 
Note that given an $s$-$t$-flow $f$ with flow value $\sum_{j\in J}d_j$, we can define a feasible solution to (MIP~\ref{ip:allocation})($d$) by $(x,y)$, where $y_{ij}= f(i,j)$ for all $(i,j)\in R$.

Further, the flow value of any $s$-$t$-flow can never be larger than $\sum_{j\in J}d_j$ (consider the $s$-$t$-cut with $T=\{t\}$).
Thus, it suffices to show that a maximum $s$-$t$-flow in $H$ has flow value no less than $\sum_{j\in J}d_j$.
To this end let $S,T\subseteq V(H)$ be any $s$-$t$-cut in $H$. 
Let $J^\prime=J\setminus S$, possibly being the empty set. 
If any location in the neighborhood of $J^\prime$ is contained in $S$, the cut contains an arc with infinite capacity, so assume $N^{-}_H(J^\prime)\cap S = \emptyset$ so that $N^{-}_H(J^\prime) \subseteq T$. 
Since $x$ is a feasible solution to (IP~\ref{ip:subset})($d$) we obtain for any subset $Q\subseteq J$
\begin{align*}
\sum_{i\in N^{-}_H(Q)}q\cdot x_i = \sum_{i\in N_G(Q)}q\cdot x_i \geq \sum_{j\in Q} d_j.
\end{align*}
We get
\begin{align*}
c(S,T)\geq\sum_{j\in J\cap S}d_j + \sum_{i\in N^{-}_H(J^\prime)} q\cdot x_i \geq \sum_{j\in J\cap S}d_j + \sum_{j\in J^\prime} d_j = \sum_{j\in J}d_j.
\end{align*}
Thus, every $s$-$t$-cut has capacity larger or equal to
$\sum_{j\in J}d_j$ and by the \emph{Max-Flow-Min-Cut Theorem} we \textcolor{red}{obtain}
the desired result, cf.~\cite{Ahuja+etal:book}.
\end{proof}

\begin{observation}
\label{obs: interpretation y}
Note that the capacities of the arcs in $R_s$ and $R_t$ defined in the
proof of Lemma~\ref{lemma:ipequivalence} are integral. Thus, there
exists an integral flow $f$ in $H$ if and only if there exists a
continuous flow $f'$ in $H$ and the variable~$y_{ij}$ can be
interpreted as the number of clients in region~$j$ taken over by the
suppliers in location $i$, cf.~\cite{Ahuja+etal:book}.
\end{observation}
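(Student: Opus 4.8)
The plan is to reduce the claim to the classical Integrality Theorem for network flows (cf.~\cite{Ahuja+etal:book}), which guarantees that a network with integral arc capacities admits an integral maximum flow. Only one direction of the equivalence requires work, namely that the existence of a continuous flow of value $\sum_{j\in J}d_j$ forces the existence of an integral one; the converse is immediate, since every integral flow is in particular a real-valued flow.

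The first point I would address is that the arcs in $R$ carry infinite capacity, so the Integrality Theorem does not apply to $H$ verbatim. I would note that, as in the proof of Lemma~\ref{lemma:ipequivalence}, the value of any $s$-$t$-flow is bounded above by $\sum_{j\in J}d_j$ through the cut $T=\{t\}$; moreover, each location $i\in I$ receives flow only from $s$, so the flow on any arc $(i,j)\in R$ never exceeds $f(s,i)\leq\sum_{j\in J}d_j$. Consequently, replacing every infinite capacity by the integer $M:=\sum_{j\in J}d_j$ alters neither the maximum flow value nor the feasibility of any flow, and after this modification all capacities are integral: those in $R_s$ equal $q\cdot x_i$, those in $R_t$ equal $d_j$, and the capped arcs in $R$ equal $M$.

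Applying the Integrality Theorem to the modified network then yields an integral maximum $s$-$t$-flow $f$. By Lemma~\ref{lemma:ipequivalence} its value equals $\sum_{j\in J}d_j$, so setting $y_{ij}=f(i,j)$ produces integral values, and flow conservation together with the capacity bounds translates precisely into the two constraint families of (MIP~\ref{ip:allocation})($d$); hence $(x,y)$ is an integral feasible solution. The interpretation claimed in the statement is then obtained by reading $f(i,j)$, the integral amount of flow routed from location $i$ to region $j$, as the number of clients in region $j$ served by suppliers at location $i$.

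I do not anticipate a genuine obstacle, as the result is essentially a bookkeeping consequence of the Integrality Theorem. The only subtlety worth making explicit is the presence of the infinite-capacity arcs, which is precisely why I would carry out the capping argument above rather than cite the theorem as a black box.
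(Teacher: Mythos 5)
Your proposal is correct and follows the same route as the paper, which simply invokes the flow integrality theorem from~\cite{Ahuja+etal:book} without further argument. Your capping of the infinite-capacity arcs in $R$ by $M=\sum_{j\in J}d_j$ (justified since every $s$-$t$-flow has value at most $\sum_{j\in J}d_j$ by the cut $T=\{t\}$, so no arc of $R$ ever carries more than $M$) is a valid way to make the black-box citation rigorous, filling in a detail the paper leaves implicit.
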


In the following, we state our results on the complexity of \nonRobustShort. 
Formal proofs of these claims can be found in~\ref{appendix: complexity}.

\begin{observation}
  \emph{Min-$1$-Multiset Multicover} is solvable in linear time.
\end{observation}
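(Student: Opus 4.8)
The plan is to show that, for $q=1$, the optimal number of suppliers equals $\sum_{j\in J} d_j$ whenever the instance is feasible, and that both feasibility and this value can be determined in linear time.

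First I would establish the lower bound. For any feasible pair $(x,y)$ of (MIP~\ref{ip:allocation})($d$) with $q=1$, summing the covering constraints over all regions and interchanging the order of summation gives
\begin{align*}
\sum_{j\in J} d_j \;\le\; \sum_{j\in J}\sum_{i\in N(j)} y_{ij} \;=\; \sum_{i\in I}\sum_{j\in N(i)} y_{ij} \;\le\; \sum_{i\in I} x_i,
\end{align*}
where the last inequality uses the capacity constraints with $q=1$. Hence $\sum_{i\in I} x_i \ge \sum_{j\in J} d_j$ for every feasible solution, and this gives a lower bound on the optimum.

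Next I would characterize feasibility and match the lower bound by an explicit construction. The instance is feasible if and only if $N(j)\neq\emptyset$ for every region $j$ with $d_j>0$: the constraint of (IP~\ref{ip:subset})($d$) for the singleton $S=\{j\}$ forces $\sum_{i\in N(j)} x_i \ge d_j$, which is impossible when $N(j)=\emptyset$ and $d_j>0$; conversely, when the condition holds, I would pick for each such region an arbitrary neighbor $i(j)\in N(j)$, set $y_{i(j),j}=d_j$ with all remaining $y$-variables equal to $0$, and set $x_i=\sum_{j:\,i(j)=i} d_j$. A direct check shows this assignment meets every covering constraint ($\sum_{i\in N(j)} y_{ij}=d_j$) and every capacity constraint ($\sum_{j\in N(i)} y_{ij}=x_i$), and it attains $\sum_{i\in I} x_i=\sum_{j\in J} d_j$, so it is optimal by the lower bound above.

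Finally I would argue the running time: deciding feasibility amounts to checking, for each region of positive demand, whether its neighborhood is nonempty, which is a single pass over the graph, and the optimal value $\sum_{j\in J} d_j$ is obtained by summing the demands in one pass over $J$. Both steps are linear in the input size. I do not expect a genuine obstacle here; the only points requiring care are the clean feasibility characterization and verifying that the greedy assignment respects the $q=1$ capacities, both of which the construction above handles directly.
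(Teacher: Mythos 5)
Your proof is correct and follows essentially the same route as the paper: assign each client its own supplier at some neighboring location, yielding $\sum_{j\in J} d_j$ suppliers in total, which is optimal because each supplier can serve at most one client when $q=1$. You merely make this more explicit than the paper does, by deriving the lower bound from summing the constraints of (MIP~\ref{ip:allocation})($d$) and by treating the degenerate infeasible case $N(j)=\emptyset$ with $d_j>0$, which the paper's one-line argument leaves implicit.
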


\begin{theorem}
	\label{thm:q2}
  \emph{Min-$2$-Multiset Multicover} can be solved in
  $O(|I|^{5/2}|J|^{5/2})$.
\end{theorem}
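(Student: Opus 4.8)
The plan is to reduce \nonRobustOptShort\ with $q=2$ to a maximum matching computation. The starting point is the interpretation that a single supplier placed at a location $i$ covers at most two demand units drawn from the regions in $N(i)$; by Observation~\ref{obs: interpretation y} we may assume the covering variables $y_{ij}$ are integral, so every supplier covers either one unit (a \emph{singleton}) or two units (a \emph{pair}) of demand. If an optimal solution uses $p$ pairs and $s$ singletons to cover the total demand $D:=\sum_{j\in J}d_j$, then $2p+s=D$ and the number of suppliers equals $p+s=D-p$. Hence minimising suppliers is the same as maximising the number of pairs $p$. Two demand units may share a pair precisely when their regions share a neighbouring location (two units of the same region $j$ qualify whenever $N(j)\neq\emptyset$). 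First I would make this correspondence exact: letting $H$ be the graph on the $D$ demand units in which two units are adjacent iff they may be co-served, one proves that the maximum number of pairs equals the matching number $\nu(H)$, so the optimum of \nonRobustOptShort\ is $D-\nu(H)$. Both inequalities follow from the pairing interpretation together with the routine fact that an over-covering integral solution can be trimmed so that each demand unit is served exactly once.

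The difficulty is that $H$ has $D$ vertices and $D$ may be exponential in the input size, so $\nu(H)$ must be obtained without materialising $H$. Here I would exploit that $H$ is a blow-up of the ``shared-location'' graph on the regions: each region $j$ induces a clique of size $d_j$ (all intra-region pairs are admissible once $N(j)\neq\emptyset$), and two such cliques are joined completely iff $N(j_1)\cap N(j_2)\neq\emptyset$. The key structural step is that intra-region pairs act as free self-loops: whenever an inter-region pair type would be used twice, those two pairs can be rerouted into two intra-region pairs without changing the pair count or violating any demand bound. Consequently one may assume each inter-region pair type is used at most once, which turns the maximum pairing into the choice of a \emph{simple} subgraph $M$ of the region graph with $\deg_M(j)\le d_j$; a short computation then shows the number of pairs equals $\tfrac12\big(D-\#\{\,j: d_j-\deg_M(j)\text{ is odd}\,\}\big)$, since each region whose residual demand is odd must carry one forced singleton. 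Thus the task collapses to a degree-bounded, parity-constrained subgraph (general-factor) problem whose size is polynomial in $|I|$ and $|J|$ and is solvable by a reduction to ordinary maximum matching; crucially the demands now enter only through their parities and through the caps $\deg_M(j)\le d_j$, which may themselves be capped at the regions' degrees. I expect this step — certifying that the blow-up collapses to a polynomial-size matching instance whose optimum still equals $\nu(H)$ — to be the main obstacle, precisely because naive component- or parity-based formulas fail in the presence of low-demand ``bottleneck'' regions (a single low-capacity cut region can force many singletons even when the total parity looks favourable).

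Finally I would bound the running time. The auxiliary matching instance can be arranged to have vertex set of size $\Theta(|I||J|)$ (for instance one vertex per incidence $(i,j)$ with $j\in N(i)$, together with the parity gadgets) and up to $\Theta\big((|I||J|)^2\big)$ edges, and it is built from $G$ in time polynomial in $|I|,|J|$. Applying a fast maximum-matching routine with the classical $O(\sqrt{|V|}\,|E|)$ bound (Hopcroft--Karp in the bipartite case, Micali--Vazirani in general) then gives the claimed $O(|I|^{5/2}|J|^{5/2})$ overall running time, after which the optimum is recovered as $D-\nu(H)$ by reading off the inter-region pairs from the matching and adding the intra-region pairs and forced singletons. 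Throughout, feasibility is checked first: the instance is infeasible exactly when some region with positive demand has an empty neighbourhood, which is detected in linear time.
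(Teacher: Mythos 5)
Your reduction is, at bottom, the paper's: the graph $H$ you describe (a clique $V_j$ of size $d_j$ per region, with complete joins exactly when $N(j_1)\cap N(j_2)\neq\emptyset$) is precisely the graph the paper constructs, and your identity ``optimum $=D-\nu(H)$'' is, via Gallai's theorem, the same as the paper's statement that the optimum equals the size of a minimum edge cover of $H$ (your explicit singleton accounting even handles the corner case of an isolated demand unit, which the bare edge-cover phrasing needs a trivial patch for). Your rerouting lemma and the count $\tfrac12\bigl(D-\#\{j\colon d_j-\deg_M(j)\text{ odd}\}\bigr)$ are both correct. The difference is what happens next, and that is where the genuine gap sits: you decline to materialize $H$ on the grounds that $D$ may be exponential in the input size, and instead route through a deficiency-minimizing, parity-constrained, degree-bounded subgraph (general-factor) problem whose reduction to ordinary maximum matching you assert but never construct --- indeed you flag it yourself as the expected main obstacle. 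The vertex and edge counts of that gadget instance ($\Theta(|I||J|)$ and $\Theta((|I||J|)^2)$), on which your entire $O(|I|^{5/2}|J|^{5/2})$ bound rests, are therefore unsupported as written; the proposal is a correct skeleton with its decisive step missing.

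The missing idea that closes the gap --- and dissolves the worry that motivated your detour --- is the paper's demand-truncation argument: one may assume $d_j\leq|I|$ for every region. If $d_j\geq|I|+1$, then in any feasible assignment $\sum_{i\in N(j)}y_{ij}\geq d_j>|N(j)|$ forces $y_{ij}\geq 2$ for some $i\in N(j)$, and removing one supplier at that location shows that the optimum for demand $d_j$ equals one plus the optimum for demand $d_j-2$; iterating reduces $d_j$ to $|I|$ or $|I|-1$ (note this preserves parity, consistent with your analysis) while accounting $\lceil(d_j-|I|)/2\rceil$ suppliers placed at any location in $N(j)$. After truncation, $D\leq|I||J|$, so $H$ can simply be built explicitly with $N\leq|I||J|$ vertices and $M=O(|I|^2|J|^2)$ edges; a single maximum-matching computation in time $O(\sqrt{N}M\log_N(N^2/M))$ followed by a linear-time augmentation to a minimum edge cover then yields exactly the claimed $O(|I|^{5/2}|J|^{5/2})$ bound, with no general-factor machinery needed. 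Your remark that the caps can be restricted to the regions' degrees is the right instinct, but it only shrinks the degree constraints inside the unconstructed gadget step rather than shrinking the instance itself, which is what the truncation accomplishes.
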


\begin{theorem}
  \label{the:msmqc_npcomplete}
  For any fixed $q\geq 3$, \nonRobust\ is NP-complete in the strong sense.
\end{theorem}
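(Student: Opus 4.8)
The plan is to prove both membership in NP and strong NP-hardness. Hardness will come from a direct reduction of \emph{Set Cover by $q$-sets}, which for every fixed $q\geq 3$ is strongly NP-complete (cf.~\cite{Garey} and the remarks in Section~\ref{sec: Introduction}); this reduction is essentially a containment, since \nonRobustShort{} with unit demands already expresses that problem.

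For membership in NP I would take $x\in\mathbb{Z}_{\geq 0}^{|I|}$ with $\sum_{i\in I}x_i\leq B$ as a certificate, whose encoding length is polynomial because every entry is bounded by $B$. The one point requiring care is that feasibility of a given $x$ must not be tested through the exponentially many subset inequalities of (IP~\ref{ip:subset})($d$). Instead I reuse the flow network $H$ from the proof of Lemma~\ref{lemma:ipequivalence}: by that lemma a compatible allocation $y$ exists if and only if the maximum $s$-$t$-flow in $H$ equals $\sum_{j\in J}d_j$, and this value is computable in polynomial time. Thus feasibility is checkable in polynomial time and \nonRobustShort{} lies in NP.

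For the reduction, let an instance of Set Cover by $q$-sets be given by a universe $U$, a collection $\mathcal{S}$ of subsets of $U$ each of size at most $q$, and an integer $k$. I construct the \nonRobustShort{} instance with regions $J=U$, locations $I=\mathcal{S}$, an edge joining $i\in I$ to $j\in J$ precisely when $j$ lies in the set represented by $i$, unit demands $d_j=1$, and budget $B=k$. Every number produced is bounded by a polynomial in the input size (the demands equal $1$ and $B\leq|U|$), so correctness of the reduction yields strong NP-hardness at once. For equivalence: a cover $\mathcal{S}'$ with $|\mathcal{S}'|\leq k$ gives a feasible solution by setting $x_i=1$ on the locations of $\mathcal{S}'$, assigning each region $j$ to one selected location adjacent to it and putting the corresponding $y_{ij}=1$, which respects $\sum_{j\in N(i)}y_{ij}\leq|N(i)|\leq q=q\cdot x_i$ and meets every demand. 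Conversely, from a feasible $(x,y)$ with $\sum_{i}x_i\leq k$ the set $\{i\colon x_i\geq 1\}$ has at most $k$ elements, and since each region $j$ receives positive flow from some adjacent location $i$, that location satisfies $q\cdot x_i\geq y_{ij}>0$ and hence $x_i\geq 1$, so the corresponding sets form a cover of $U$.

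I do not expect a serious obstacle: the hardness is a containment argument, and its only delicate features are the faithful encoding of the set sizes---one copy of a $q$-set has covering capacity exactly $q$, so it can cover all of its elements simultaneously, while covering any element forces selecting a set that contains it---and the observation in the backward direction that allowing several copies of a location ($x_i>1$) is never advantageous, which is immediate since the budget also bounds the number of distinct selected locations. The comparatively more technical ingredient is the NP-membership proof, where the exponential family of subset constraints is sidestepped through the max-flow reformulation of Lemma~\ref{lemma:ipequivalence}.
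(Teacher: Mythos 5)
Your proposal is correct, and your NP-membership argument is exactly the paper's: take the multiplicity vector $x$ as certificate and test its feasibility by a single max-flow computation in the network of Lemma~\ref{lemma:ipequivalence}, thereby sidestepping the exponentially many subset inequalities. The hardness part differs only in the source problem. The paper reduces from \emph{Exact Cover by $3$-sets} (\cite{Garey}, [SP2]): it sets $I=\{i_S\colon S\in\mathcal{S}\}$, $J=X$, $d_j=1$ and the tight budget $B=\nicefrac{|X|}{3}$, so that since $|N(i_S)|=3$ each chosen supplier can serve at most three unit demands, and meeting $|X|$ demands within budget $\nicefrac{|X|}{3}$ forces the chosen sets to form an exact cover; the condition $|N(i_S)|=3$ also makes the argument manifestly uniform in $q\geq 3$. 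You instead reduce from \emph{Set Cover} with sets of size at most $q$ and budget $B=k$, which formalizes the pure containment the paper itself mentions informally right before its proof (``\nonRobustShort\ is a generalization of Set Cover by $q$-sets''); this buys you slightly simpler correctness directions, since no exactness or counting argument is needed, at the cost of invoking the strong NP-completeness of bounded-size set cover rather than the canonical exactly-$3$ problem. Both constructions use only unit demands and polynomially bounded numbers, so strong NP-hardness follows identically in either case. One small point of hygiene: justify your source problem by citing set cover restricted to sets of size at most $3$ (\cite{Garey}, problem SP5), which is strongly NP-complete and implies the ``size at most $q$'' version a fortiori for every fixed $q\geq 3$; likewise, your remark that $B\leq|U|$ holds only after the harmless normalization $k\leq|U|$ (a yes-instance always admits a cover of size at most $|U|$), which is worth stating explicitly.
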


\begin{observation}
  There is a $\log(q)$-approximation for \nonRobustOpt.
\end{observation}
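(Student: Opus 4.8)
The plan is to reduce \nonRobustOpt\ to an ordinary \emph{Multiset Multicover} instance, exactly as indicated in Remark~\ref{rem:multiset_multicover}, and then to apply the classical greedy heuristic for covering problems whose covering objects have bounded size. Since every supplier may cover at most $q$ units of demand, this should yield the harmonic bound $H_q=\sum_{k=1}^{q}\frac1k\le \ln q+1=O(\log q)$; note that for fixed $q$ this is a constant-factor guarantee.

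First, I would make the reduction precise and verify that it preserves optima. For each location $i\in I$, introduce as covering objects all multisets of cardinality $q$ over $N(i)$, each labelled by $i$ and of unit cost; for fixed $q$ there are only $O(|J|^{q})$ of them, so the blow-up is polynomial. A feasible solution of \nonRobustOptShort\ placing $x_i$ suppliers at $i$ induces, via an integral allocation $y$ from (MIP~\ref{ip:allocation})($d$) (whose integrality is granted by Lemma~\ref{lemma:ipequivalence}), a choice of $x_i$ such multisets at $i$: cut the values $y_{ij}$ into $x_i$ chunks of total size at most $q$ and pad each chunk up to size $q$ with arbitrary elements of $N(i)$. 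Conversely, each labelled multiset chosen for $i$ contributes one supplier at $i$, and setting $y_{ij}$ to the total coverage of $j$ by label-$i$ multisets recovers a feasible pair for (MIP~\ref{ip:allocation})($d$). Hence the optimum of the constructed Multiset Multicover instance equals the optimum of \nonRobustOptShort.

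Second, I would run the greedy algorithm on this instance: repeatedly select the labelled multiset covering the largest amount of still-uncovered demand, where the contribution of each element is capped at its residual demand, then update the residual demands and iterate until every region is satisfied. Because each covering object contributes at most $q$ units of (capped) coverage, the standard charging argument for covering integer programs applies: distributing the unit cost of a chosen object over the demand units it newly covers, and charging the $k$-th unit of demand covered at any region a cost of at most $1/k$ against the optimum, bounds the total greedy cost by $H_q\cdot\mathrm{OPT}$. This is precisely the greedy guarantee for Multiset Multicover whose constraint matrix has maximum column sum $q$, cf.~\cite{Rajagopalan+etal:multicover}.

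The step requiring the most care is the greedy analysis in the presence of demands larger than one: unlike plain \emph{Set Cover}, over-covering a region must not be credited, so the coverage of each chosen multiset has to be measured against the residual demand, and the fractional charging scheme must be arranged so that the $k$-th unit of demand charged to any region receives weight at most $1/k$. Once this accounting is in place the estimate $H_q=O(\log q)$ follows, which completes the approximation guarantee.
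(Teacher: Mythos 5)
Your proposal is correct and follows exactly the paper's route: invoke the reduction of Remark~\ref{rem:multiset_multicover} to \emph{Multiset Multicover} with all multisets of size~$q$, and then apply the known greedy $\log(s)$-guarantee (here $s=q$) for that problem. The paper simply cites this guarantee from the literature, whereas you additionally spell out the optimum-preserving correspondence via Lemma~\ref{lemma:ipequivalence} and sketch the $H_q$ charging argument, but the underlying argument is the same.
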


Now, we concentrate on a robust version of \nonRobust.


\section{Problem definition and classification of the robust version}
\label{sec: Robust Version}
In this section, we extend the initial problem \nonRobustOpt\ to
include uncertainty in the number of clients~$d_j$ of each region
$j\in J$.  We apply concepts of robust optimization such as strict and adjustable robustness
and combine interval and budgeted uncertainty as mentioned in the introduction, 
cf.~\cite{ROBook,adjustableRobustness}.
For each region $j\in J$, we consider two
non-negative integers $a_j$ and $b_j$ with $a_j \leq b_j$, which respectively correspond to the minimum and maximum number of clients in that region.
Concerning the total amount of clients in all regions, we additionally
require this value not to exceed some given constant
$\Gamma \in \mathbb{Z}_{\geq 0}$ to prevent the global worst case.
Then, a vector $\xi \in \mathbb{Z}^{|J|}$ with $\xi_j\in[a_j,b_j]$ and
$\sum_{j\in J}\xi_j\leq \Gamma$ is called a \emph{scenario} and we
denote by $\mathcal{U}$ the set of all scenarios, i.e.,
\begin{align*}
  \mathcal{U}=\left\{\xi\in\mathbb{Z}^{|J|}:\xi_j\in[a_j,b_j]\ \forall j\in J, \sum_{j\in J}{\xi_j}\leq \Gamma\right\}.
\end{align*}
The set $\mathcal{U}$ is also called the \emph{uncertainty set}.  Note
that $\mathcal{U}$ is finite since we only consider integral demands
in our problem. For a vector $x\in\mathbb{R}^{n}$ and some set $F\subseteq\{1\dots n\}$ we use
the common notation $x(F)=\sum_{i\in F}x_i$.
\begin{assumption}
  \label{assumption: gamma}
  In order to obtain a meaningful uncertainty set we assume that
  $\sum_{j\in J}a_j \leq \Gamma \leq \sum_{j\in J}b_j$ implying
  $\mathcal{U}\neq \emptyset$. Moreover, we assume without loss of
  generality that $\Gamma$ is chosen in such a way that
  $b_j + \sum_{k\neq j}a_k\leq \Gamma$ such that, for each region $j\in J$, there exists a scenario
  $\xi$ with $\xi_j=b_j$. Otherwise we could
  decrease the upper bound~$b_j$ in the corresponding region.
\end{assumption}
The intuition of the robust version of \nonRobustOptShort\ is to choose a minimum number of suppliers, such that in 
any scenario of $\mathcal{U}$ all clients may be served. In the following,
we will see how to incorporate this intuition into the models introduced
in Section~\ref{sec: Problem Definition and Classifications}. We begin by 
\emph{robustifying} (IP~\ref{ip:subset})($d$).

Each scenario $\xi\in\mathcal{U}$ defines a
single problem in the fashion of Problem~\ref{prob: MSMC_alt} when
denoting the amount of clients by $d_j=\xi_j$ for all $j$.  Therefore,
given a fixed scenario $\xi$, we consider the integer linear program (IP~\ref{ip:subset})$(\xi)$.
In terms of robust optimization, we obtain the uncertain integer
linear program:
\begin{equation}
  \label{eq: uncertain IP, set}
  \left\{\min\left\{\,\sum_{i\in I}x_i: x \text{ is feasible for
        (IP~\ref{ip:subset})}(\xi)\,\right\}: \xi \in \mathcal{U}\right\} .
\end{equation}
Our aim is to find $x_i\in\mathbb{Z}_{\geq 0}$ for all $i\in I$,
such that all clients can be
served independently of the actually occurring ``true''
scenario. Therefore, we concentrate on the analysis of the following
problem:
\begin{problem}[\RobustShort, set formulation]
  \label{prob: RMSMqC, set}
  \mbox{}\\
  \noindent\textbf{Instance:} Set of possible locations $I$, set of
  regions $J$, non-negative integers $a_j$, $b_j$ with $a_j\leq b_j$
  for all $j\in J$, integer $\Gamma$ satisfying
  Assumption~\ref{assumption: gamma}, bipartite graph $G=(I\cup J, E)$
  and a positive integer~$B\in\mathbb{Z}_{>0}$.

  \noindent\textbf{Question:} Are there 
  $x_i\in\mathbb{Z}_{\geq 0}$ for all $i\in I$ such that
  $\sum_{i\in I}x_i\leq B$ and for all subsets $S \subseteq J$ and all
  scenarios $\xi \in \mathcal{U}$ we have
\begin{align*}
\sum_{i\in N(S)}q \cdot x_i\geq\sum_{j\in S}\xi_j?
\end{align*}
\end{problem}
The minimization problem corresponding to \RobustShort, i.e. \RobustOptShort, can be formulated as the robust counterpart of \eqref{eq: uncertain IP, set}:
\begin{mini!}|s|[]
{x}
{\sum_{i\in I} {x_i}}
{\label{IP: RMSMqC, set, start}}
{\text{(IP~\ref{IP: RMSMqC, set, start})}\quad}
\addConstraint{\sum_{i \in N(S)} q\cdot x_i}{\geq \sum_{j\in S}\xi_j \quad\label{ip:robust_set_const}}{\forall S\subseteq J,\ \forall\xi \in \mathcal{U}}
\addConstraint{x_i}{\in \mathbb{Z}_{\geq 0} \quad}{\forall i\in I.}
\end{mini!}
Note that the uncertain data only occurs on the right hand side of the
above constraints.  Thus, this problem can be simplified by computing,
for every subset~$S\subseteq J$, the maximum of $\sum_{j\in S} \xi_j$ over
the uncertainty set $\mathcal{U}$.  This maximum is given by
$\tilde{d}_S:=\min\{b(S),\Gamma-a(J\setminus S)\}$: 
\begin{itemize}
\item[-] If $b(S) + a(J\setminus S) \leq \Gamma$, then $\underset{\xi\in\mathcal{U}}{\max}\ \sum_{j\in S}\xi_j = b(S)$.
\item[-] If $b(S) + a(J\setminus S) > \Gamma$, then $\underset{\xi\in\mathcal{U}}{\max}\ \sum_{j\in S}\xi_j = \Gamma - a(J\setminus S)$. (Since
  $b(S) > \Gamma - a(J\setminus S)$ and
  $a(S) \leq \Gamma - a(J\setminus S)$ a corresponding scenario
  clearly exists.)
\end{itemize}
Consequently, we can replace $\sum_{j\in S}\xi_j$ in line~\eqref{ip:robust_set_const}
 of~(IP~\ref{IP: RMSMqC, set, start}) by $\tilde{d}_S$
and reformulate the question posed in Problem~\ref{prob: RMSMqC, set}
as follows: Are there
$x_i\in\mathbb{Z}_{\geq 0}$ for all $i\in I$ such that
$\sum_{i\in I}x_i\leq B$ and for all subsets $S \subseteq J$:
\begin{align*}
\sum_{i\in N(S)}q \cdot x_i\geq \tilde{d}_S?  
\end{align*}
Observe that when using this formulation the problem is independent
from the uncertainty set $\mathcal{U}$. But in comparison to the non-robust
formulation of Section~\ref{sec: Problem Definition and
  Classifications}, the value $\tilde{d}_S$ cannot be split into a sum
of clients over the single regions of $S$ anymore.

As in Section~\ref{sec: Problem Definition and Classifications}, we
aim to obtain an equivalent assignment formulation for
Problem~\ref{prob: RMSMqC, set}.  A first idea is to consider
the robust counterpart of the uncertain IP

\begin{equation}
\label{eq: uncertain IP, assignment}
\left\{\min\left\{\,
    \sum_{i\in I}x_i: (x,y) \text{ is feasible for 
		(MIP~\ref{ip:allocation})}(\xi)\,\right\}: \xi \in \mathcal{U}\right\}. 
\end{equation}

Since $(x,y)$ needs to be feasible for (MIP~\ref{ip:allocation})$(\xi)$
for any scenario $\xi$ and since for each region $j\in J$ there exists
a scenario with $\xi_j=b_j$ (cf.\ Assumption~\ref{assumption: gamma}),
the solution vector $(x,y)$ can be computed by solving the
mixed integer linear program (MIP~\ref{ip:allocation})$(b)$.
In general, $\sum_{j\in S}b_j = b(S) = \tilde{d}_S$
does not hold for all $S\subseteq J$ and we see that formulation 
\eqref{eq: uncertain IP, assignment} cannot be equivalent to \eqref{eq: uncertain IP, set}.
Furthermore, the upper bound on the number of clients $\Gamma$ is not
needed in~\eqref{eq: uncertain IP, assignment}. Eliminating the constraint
$\sum_{j\in J}\xi_j\leq \Gamma$ in the definition of the uncertainty
set $\mathcal{U}$ would lead to $\tilde{d}_S=b(S)$.  Only in this
special case, both formulations \eqref{eq: uncertain IP, assignment} and
\eqref{eq: uncertain IP, set} are equivalent as shown in Section \ref{sec:
  Problem Definition and Classifications}.

Actually, computing a global solution~$y$ is far too conservative and
applying strict robustness is unrewarding.  Moreover, 
\eqref{eq: uncertain IP, assignment} does not match the intuition of \RobustOptShort\ as we have
to fix the $y_{ij}$ before the actual scenario is revealed. Recalling the interpretation of the variables~$y_{ij}$ in Observation~\ref{obs: interpretation y}, it is meaningful to fix the variables $y_{ij}$ only \emph{after} the realization of the true scenario $\xi$ is known. Thus, we only need to settle the decision over the $x_i,$
$i \in I,$ before the realization becomes apparent, while additionally
ensuring the existence of an assignment~$y$ of suppliers to clients. Therefore, we apply the concept of \emph{adjustable robustness}
\cite{adjustableRobustness} with $x$ containing the ``here and now''
variables and $y$ corresponding to the ``wait and see'' variables.
Then, our aim is to find $x_i\in\mathbb{Z}_{\geq 0}$ for all $i\in I$,
minimizing $\sum_{i\in I}x_i$, such that, for every
$\xi \in\mathcal{U}$, there exist $y(\xi)$ with $(x,y(\xi))$ being
feasible for (MIP~\ref{ip:allocation})$(\xi)$.  This approach leads to
the \emph{adjustable robust counterpart} of~\eqref{eq: uncertain IP,
  assignment}:
\begin{mini!}|s|[]
{x,y}
{\sum_{i\in I} {x_i}}
{\label{IP: ARC}}
{\text{(MIP~\ref{IP: ARC})}\quad}
\addConstraint{\sum_{i \in N(j)} y_{ij}(\xi)}{\geq \xi_j \quad}{\forall j\in J,\ \xi \in \mathcal{U} \label{ARC1}}
\addConstraint{\sum_{j \in N(i)} y_{ij}(\xi)}{\leq q \cdot x_i \quad}{\forall i \in I,\ \xi \in \mathcal{U} \label{ARC2}} 
\addConstraint{y_{ij}(\xi)}{\geq 0 \quad}{\forall i\in I,\ \forall j\in J,\ \xi\in \mathcal{U}}
\addConstraint{x_i}{\in \mathbb{Z}_{\geq 0} \quad}{\forall i\in I.}
\end{mini!}
The corresponding decision problem is defined as follows:
\begin{problem}[\RobustShort, assignment formulation]
\label{prob: RMSMqC, assignment}~\\
\noindent\textbf{Instance:} Set of possible locations~$I$, set of regions~$J$, non-negative integers~$a_j$, $b_j$ with
$a_j\leq b_j$ for all $j\in J$, integer~$\Gamma$ satisfying
Assumption~\ref{assumption: gamma}, bipartite graph $G=(I\cup J, E)$
and a positive integer~$B\in\mathbb{Z}_{>0}$.

\noindent\textbf{Question:} Are there
$x_i\in\mathbb{Z}_{\geq 0}$ for all $i\in I$, such that 
$\sum_{i\in I}x_i\leq B$ and for all scenarios $\xi \in \mathcal{U}$ there are
$y_{ij}(\xi)\in\mathbb{R}_{\geq 0}$ for all $i\in I$, $j\in J$, such
that 
\begin{align*}
  \sum_{i\in N(j)}y_{ij}(\xi) \geq \xi_j \ \forall j \in J\text{ and } \sum_{j\in N(i)}y_{ij}(\xi) \leq q \cdot x_i \ \forall i\in I ?   
\end{align*}
\end{problem}
As in the non-robust version, the assignment variables~$y_{ij}(\xi)$
can be chosen to be integral whenever there exists a solution
$(x,y)\in \mathbb{Z}^{|I|}\times\mathbb{R}^{|I||J||\mathcal{U}|}$ for
Problem~\ref{prob: RMSMqC, assignment}.  
Thus, we can interpret the variable
$y_{ij}(\xi)$ as the number of clients in region~$j$ taken over by suppliers in location~$i$ in case scenario $\xi$
occurs. A similar problem is analyzed in ~\cite{GABREL2014100}, whereas a general approach for adjustable robustness in the LP-case with right hand side uncertainty is investigated in~\cite{Minoux2011}.

Now, we are able to prove the equivalence between the robust
set formulation defined in Problem~\ref{prob: RMSMqC, set} and the
adjustable robust assignment formulation defined in Problem~\ref{prob:
  RMSMqC, assignment}.

\begin{proposition}
  Problem~\ref{prob: RMSMqC, set} and Problem~\ref{prob: RMSMqC,
    assignment} are equivalent. 
\end{proposition}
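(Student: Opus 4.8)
The plan is to reduce the claim to a scenario-by-scenario application of Lemma~\ref{lemma:ipequivalence}. Note first that the budget constraint $\sum_{i\in I} x_i \leq B$ and the domain $x_i\in\mathbb{Z}_{\geq 0}$ are identical in Problem~\ref{prob: RMSMqC, set} and Problem~\ref{prob: RMSMqC, assignment}. Hence it suffices to fix an arbitrary candidate vector $x\in\mathbb{Z}_{\geq 0}^{|I|}$ and show that $x$ satisfies the covering condition of the set formulation if and only if it satisfies the covering condition of the assignment formulation; the two feasible sets then coincide, which is exactly the asserted equivalence.

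First I would rewrite the set condition. In Problem~\ref{prob: RMSMqC, set} the two universal quantifiers over $S\subseteq J$ and over $\xi\in\mathcal{U}$ range over independent index sets, so the requirement that $\sum_{i\in N(S)} q\cdot x_i \geq \sum_{j\in S}\xi_j$ holds for all $S$ and all $\xi$ is logically equivalent to: for every $\xi\in\mathcal{U}$, the inequality $\sum_{i\in N(S)} q\cdot x_i \geq \sum_{j\in S}\xi_j$ holds for all $S\subseteq J$. By definition, this inner statement is precisely the feasibility of $x$ for (IP~\ref{ip:subset})($\xi$) with demand vector $d=\xi$. Analogously, in Problem~\ref{prob: RMSMqC, assignment} the condition asks that for every $\xi\in\mathcal{U}$ there exist $y_{ij}(\xi)\geq 0$ satisfying the covering and capacity constraints; for a fixed $\xi$ this is exactly the existence of a $y\in\mathbb{R}_{\geq 0}^{|I||J|}$ making $(x,y)$ feasible for (MIP~\ref{ip:allocation})($\xi$) with $d=\xi$. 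So both conditions are, after grouping, conjunctions over $\xi\in\mathcal{U}$ of per-scenario feasibility statements.

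The final step applies Lemma~\ref{lemma:ipequivalence} separately for each scenario: setting the demand vector to $d=\xi$, the lemma states that $x$ is feasible for (IP~\ref{ip:subset})($\xi$) if and only if there exists $y\geq 0$ with $(x,y)$ feasible for (MIP~\ref{ip:allocation})($\xi$). Since the set condition and the assignment condition are the conjunctions of these equivalent per-scenario statements over all $\xi\in\mathcal{U}$, the two conditions coincide, which establishes the equivalence of the two problems.

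I do not expect a genuine obstacle here: the mathematical content is carried entirely by Lemma~\ref{lemma:ipequivalence}, and the only point requiring care is the quantifier bookkeeping, namely observing that the ``$\forall S$'' of the set formulation can be grouped under each fixed $\xi$ so that the earlier non-robust equivalence applies verbatim scenario by scenario. As a byproduct, the integrality of the chosen assignments $y(\xi)$ follows from the same lemma, as already noted after Problem~\ref{prob: RMSMqC, assignment}.
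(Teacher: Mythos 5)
Your proof is correct and follows essentially the same route as the paper: both arguments fix a scenario $\xi\in\mathcal{U}$, invoke Lemma~\ref{lemma:ipequivalence} with demand vector $d=\xi$ to pass between feasibility for (IP~\ref{ip:subset})$(\xi)$ and the existence of an assignment $y(\xi)$ for (MIP~\ref{ip:allocation})$(\xi)$, and then take the conjunction over all scenarios. Your explicit quantifier bookkeeping is merely a more formal rendering of what the paper does implicitly, so there is nothing to correct.
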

\begin{proof}
  Since the objective functions are the same, it remains to be shown
  that any solution~$(x,y)$ of (MIP~\ref{IP: ARC}) yields a solution
  $x^\prime$ of (IP~\ref{IP: RMSMqC, set, start}) with the same
  objective value and vice versa.  Thus, let $(x,y)$ be feasible for \eqref{IP: ARC} with
  $y=(y(\xi_1), y(\xi_2),\ldots)$.
  Fix a scenario $\xi \in \mathcal{U}$. Then $(x,y(\xi))$ is feasible
  for (MIP~\ref{ip:allocation})$(\xi)$.  Due to the equivalence of the
  formulations in the non-robust version, we get that $x$ fulfills
	\begin{align*}
	\sum_{i \in N(S)} q\cdot x_i\geq \sum_{j\in S} \xi_j\quad \forall S\subseteq J
	\end{align*}
Since this argument holds true for any fixed scenario $\xi$, we obtain that $x$ is feasible for (IP~\ref{IP: RMSMqC, set, start}).

On the other hand, given a solution $x$ of (IP~\ref{IP: RMSMqC, set, start}), for any fixed scenario $\xi$, there exists $y(\xi)$ such that $(x,y(\xi))$ is feasible for (MIP~\ref{ip:allocation})$(\xi)$ due to the results of Section~\ref{sec: Problem Definition and Classifications}.
In total, we obtain that $(x,y)$ with $y=(y(\xi_1), y(\xi_2),\ldots)$ is feasible for (MIP~\ref{IP: ARC}).
\end{proof}

At this point, we see that our initial link to \emph{Multiset Multicover by
$q$-sets} is lost when including robustness, since the assignment
variables $y\in \mathbb{R}^{|I||J||\mathcal{U}|}$ can be specified in
a subsequent step when the ``true'' scenario is already known.  In the
robust case the
value $x_i$ has to be specified in advance so 
that, for each possible scenario $\xi$, there exists a selection of
$x_i$ sets for each location $i\in I$ satisfying the upcoming demand.
Therefore, considering robustness leads to a completely new problem in
comparison to Section~\ref{sec: Problem Definition and
  Classifications} which we investigate further in the following.
Before we concentrate on the complexity of \RobustShort, we state
some properties of the problem.

\begin{observation}
  Let $z$ be the optimal solution value of \RobustOptShort. Then:
  \begin{itemize}
  \item[(a)] $z \geq \left\lceil \frac{\Gamma}{q} \right\rceil.$ 
  \item[(b)] Define $\bar{x}\in\mathbb{Z}^{|I|}$ in the following way:
	For all $j\in J$, choose $i\in N(j)$ and increase $\bar{x}_i$ by $\left\lceil \frac{b_j}{q} \right\rceil$. Then, $\bar{x}$ is feasible for (IP~\ref{IP: RMSMqC, set, start}) and we get 
	$z \leq \sum_{j\in J}\left\lceil \frac{b_j}{q} \right\rceil.$
  \item[(c)] It suffices to consider all scenarios $\xi\in\mathcal{U}$
    with $\sum_{j\in J}\xi_j=\Gamma.$  
  \end{itemize}
\end{observation}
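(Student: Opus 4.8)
The three assertions are independent, so I would prove each one directly from the set formulation (IP~\ref{IP: RMSMqC, set, start}), exploiting the closed form $\tilde{d}_S=\min\{b(S),\Gamma-a(J\setminus S)\}$ for the worst-case right-hand side derived just above the observation.

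For part (a), I would instantiate the robust set constraint at $S=J$. Since $N(J)\subseteq I$ and all variables are non-negative, the left-hand side satisfies $\sum_{i\in N(J)}q\cdot x_i\le q\sum_{i\in I}x_i=q\,z$. The worst-case right-hand side is $\tilde{d}_J=\min\{b(J),\Gamma\}=\Gamma$, where the second equality uses $\Gamma\le b(J)$ from Assumption~\ref{assumption: gamma}. Combining these gives $q\,z\ge\Gamma$, and since $z$ is a non-negative integer I may round up to obtain $z\ge\lceil\Gamma/q\rceil$.

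For part (b), the task is to verify that the explicitly constructed vector $\bar{x}$ satisfies every constraint of (IP~\ref{IP: RMSMqC, set, start}); the upper bound on $z$ then follows because the total mass added across all locations is exactly $\sum_{j\in J}\lceil b_j/q\rceil$. Fixing $S\subseteq J$, I note that for every $j\in S$ the construction adds $\lceil b_j/q\rceil$ to some location $i_j\in N(j)\subseteq N(S)$, so all of these contributions are counted in $\sum_{i\in N(S)}\bar{x}_i$, yielding
\begin{align*}
\sum_{i\in N(S)}q\cdot\bar{x}_i\ \ge\ q\sum_{j\in S}\Bigl\lceil\tfrac{b_j}{q}\Bigr\rceil\ \ge\ \sum_{j\in S}b_j=b(S)\ \ge\ \tilde{d}_S .
\end{align*}
Hence $\bar{x}$ is feasible and $z\le\sum_{i\in I}\bar{x}_i=\sum_{j\in J}\lceil b_j/q\rceil$. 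The only point requiring care is that distinct regions may select the same location, so one must argue with the accumulated sum over $N(S)$ rather than coordinate by coordinate.

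For part (c), I would use a monotonicity (domination) argument. Given any scenario $\xi\in\mathcal{U}$ with $\sum_{j\in J}\xi_j<\Gamma$, I would greedily raise individual coordinates toward their upper bounds $b_j$; since $\sum_{j\in J}\xi_j\le\Gamma\le b(J)$ and all quantities are integral, this produces a scenario $\xi'\in\mathcal{U}$ with $\xi'\ge\xi$ componentwise and $\sum_{j\in J}\xi'_j=\Gamma$. Because $\xi'\ge\xi$ implies $\sum_{j\in S}\xi'_j\ge\sum_{j\in S}\xi_j$ for every $S\subseteq J$, the constraint of (IP~\ref{IP: RMSMqC, set, start}) for the pair $(S,\xi')$ dominates the one for $(S,\xi)$, so restricting attention to full-budget scenarios discards no relevant constraints. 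I expect this domination step to be the main (though still mild) obstacle, as it hinges on checking that the budget $\Gamma$ is always exactly attainable within the box $\prod_{j\in J}[a_j,b_j]$.
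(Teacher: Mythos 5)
Your proof is correct, and since the paper states this as an observation without giving a proof, your arguments --- instantiating the set constraint at $S=J$ where $\tilde{d}_J=\min\{b(J),\Gamma\}=\Gamma$ by Assumption~\ref{assumption: gamma}, verifying feasibility of $\bar{x}$ via $q\sum_{j\in S}\lceil b_j/q\rceil\geq b(S)\geq\tilde{d}_S$, and the componentwise domination argument for (c) --- are exactly the routine verifications the paper intends. You also correctly flag the only subtle points: $N(J)\subseteq I$ need not equal $I$, distinct regions may select the same location in (b) so one must sum over $N(S)$ rather than argue coordinatewise, and the budget $\Gamma$ is attainable within the box because $\Gamma\leq b(J)$ and all data are integral.
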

From part~(a) and part~(b) it follows that \RobustOptShort\ has a finite optimal solution. Furthermore, from now on we restrict the problem to scenarios whose demands sum up to $\Gamma$. We call such a scenario an \emph{extreme scenario} and
denote by $\mathcal{U}^\prime\subseteq \mathcal{U}$ the set of all
extreme scenarios.

In the following, we utilize the \emph{Dominating Set} problem~\cite{Garey}
to show NP-hardness for \RobustShort. In the former
problem, given an undirected graph $G=(V,E)$ and a positive integer
$K\leq |V|$, the question is whether there exists a subset
$V^\prime\subseteq V$ with $|V^\prime|\leq K$ such that for all
$u\in V\setminus V^\prime$ there is $v\in V^\prime$ for which
$\{u,v\}\in E$. This problem is well-known to be NP-complete.

%

\begin{theorem}
  For fixed $q\in\Z_{>0}$, \RobustShort\ is strongly NP-hard.
\end{theorem}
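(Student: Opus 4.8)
The plan is to reduce from \emph{Dominating Set}, which (as recalled just above the statement) is NP-complete, and to arrange that every number in the produced instance is bounded by a constant, so that NP-hardness immediately upgrades to strong NP-hardness. A pleasant feature of the construction is that it works uniformly for every fixed $q\geq 1$, and hence also covers the cases $q\in\{1,2\}$ that are polynomially solvable in the non-robust setting of Section~\ref{sec: Problem Definition and Classifications}; this is exactly the phenomenon predicted by Remark~\ref{rem:multiset_multicover}.

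Given a Dominating Set instance $(G=(V,E),K)$, I would build the following instance of \RobustShort\ in the set formulation (Problem~\ref{prob: RMSMqC, set}). Take $I$ and $J$ to be two disjoint copies of $V$, and define the bipartite graph $G'=(I\cup J,E')$ by joining location $i\in I$ to region $j\in J$ exactly when $i=j$ or $\{i,j\}\in E$; thus $N_{G'}(j)$ is the closed neighborhood $N_G(j)\cup\{j\}$. Set $a_j=0$ and $b_j=q$ for every $j\in J$, let $\Gamma=q$, and put $B=K$. Verifying Assumption~\ref{assumption: gamma} is immediate: $\sum_j a_j=0\le q=\Gamma$, $\sum_j b_j=q|V|\ge\Gamma$, and $b_j+\sum_{k\ne j}a_k=q=\Gamma$.

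The heart of the argument is the computation of the simplified right-hand sides $\tilde d_S=\min\{b(S),\,\Gamma-a(J\setminus S)\}$ introduced before~(IP~\ref{IP: RMSMqC, set, start}). Because $a\equiv 0$ and $b_j=\Gamma=q$, one gets $\tilde d_\emptyset=0$ and $\tilde d_S=\min\{q|S|,q\}=q$ for every nonempty $S$. Hence the robust constraints collapse to $\sum_{i\in N(S)}x_i\ge 1$ for all nonempty $S\subseteq J$, and since $N(S)\supseteq N(\{j\})$ these are all implied by the singleton constraints $\sum_{i\in N_G(j)\cup\{j\}}x_i\ge 1$, one per region $j$. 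From here I would read off the equivalence: any feasible $x$ with $\sum_i x_i\le B$ has support $D=\{i:x_i\ge 1\}$ meeting the closed neighborhood of every vertex, so $D$ is a dominating set with $|D|\le\sum_i x_i\le K$; conversely, a dominating set $D$ of size at most $K$ yields the feasible $0/1$ vector $x=\mathbf{1}_D$. Thus the constructed instance is a yes-instance iff $G$ has a dominating set of size at most $K$.

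The step I expect to require the most care is pinning down the uncertainty parameters so that the budget $\Gamma$ performs the encoding: the crucial point is that $b_j=\Gamma=q$ lets the adversary concentrate the entire budget on a single region, forcing a fully usable supplier into that region's closed neighborhood, while simultaneously $\tilde d_S$ never exceeds $q$, so multi-region sets $S$ impose nothing new. This is precisely where budgeted uncertainty is essential: dropping the budget constraint would give $\tilde d_S=b(S)=q|S|$ and a different, multicover-like requirement. A secondary point to argue cleanly is that taking $x_i\ge 2$ can never help feasibility, so an optimal solution is attained by a $0/1$ vector whose objective equals the domination number. Finally, since $q$ is a fixed constant and all data are either $O(q)$ or at most $|V|$, the reduction runs in polynomial time and outputs only polynomially bounded numbers, which yields strong NP-hardness.
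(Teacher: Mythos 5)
Your proof is correct and follows essentially the same route as the paper: a reduction from \emph{Dominating Set} using the bipartite closed-neighborhood graph, $a_j=0$ for all $j$, and a budget that forces exactly one effective unit of demand, so that $\tilde d_S$ is constant on nonempty $S$ and the robust constraints collapse to the singleton domination constraints. The only difference is cosmetic: the paper sets $b_j=\Gamma=1$ (so the constraints read $\sum_{i\in N(S)} q\cdot x_i\geq 1$ and integrality of $x$ does the work), while you scale to $b_j=\Gamma=q$ so the factor $q$ divides out directly.
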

\begin{proof}
  We show that there exists a polynomial time reduction from
  \emph{Dominating Set} to \RobustShort.  To this end, let an undirected
  graph $G=(V,E)$ with $V=\left\{1,\ldots,n\right\}$ and an integer $K\leq n$ be
  given.  To construct an instance of \RobustShort\ we set
  $I=\left\{1,\ldots,n\right\}$ and $J=\left\{n+1,\ldots,2n\right\}$.  For every edge
  $\{u,v\}\in E$, we add the edge $\left\{u,n+v\right\}$ and the edge $\{v,n+u\}$
  to the bipartite graph $G^\prime$ with vertex set $I\cup J$ and edge
  set $E'$.  Additionally, for every $v\in V$, the edge $\{v,n+v\}$ is
  added to $E'$.  Moreover, we define $a_j=0$, $b_j=1$ for all
  $j\in J$, $\Gamma =1$ and $B=K$.  Thus, we have $\tilde{d}_S=\min\left\{|S|,1\right\}=1$
  for any non-empty subset $S\subseteq J$.

  Let $V^\prime\subseteq V=I$ be a solution of \emph{Dominating Set} such
  that $|V^\prime|\leq K$. Then, we set $x_i=1$ for all
  $i\in V^\prime$ and zero else so that $\sum_{i\in I}x_i\leq B$
  already holds true. Fix an arbitrary subset $S\subseteq J,$
  $S \neq \emptyset$.  We want to show that
  \begin{align*}
    \sum_{i\in N(S)}q \cdot x_i \geq 1.    
  \end{align*}
  Thus, we need to prove that at least one value $x_i$ for $i\in N(S)$ is set to one, i.e. $N(S)\cap V'\neq \emptyset$.
  Choose an arbitrary element $n+v \in S$ with $v\in
  \left\{1,\ldots,n\right\}$. $V^\prime$ is a dominating set, so we have
  $v\in V^\prime$ or there is $u\in V^\prime$ adjacent to $v$ in $G$.
  In the former case, $v \in N(S)$ since $G^\prime$ contains the edge
  $\{v,n+v\}$.
  In the latter case, $u\in N(S)$ since $G^\prime$ contains the edge $\{u,n+v\}.$ Thus, $\sum_{i\in N(S)}q\cdot x_i \geq 1$ holds true in any case, so that $x$ is a solution of \RobustShort.
 
  Conversely, suppose that $x$ is a solution of \RobustShort\ such
  that $\sum_{i\in I}x_i\leq B$.  Since $\tilde{d}_S=1$ for all
  $S\subseteq J,\ S\neq \emptyset$, we can assume without loss of
  generality that $x_i\leq 1$ for all $i\in I.$ The set $V^\prime$ is
  defined to contain all vertices $v\in V$ such that $x_v=1$.
  Clearly, $|V^\prime|\leq B=K$ and we claim that $V^\prime$ is a
  dominating set for $G$.  To this end, choose a vertex
  $u \in V-V^\prime$ and consider the set $S=\left\{n+u\right\}\subseteq J$.
  Since $x$ is a feasible solution, there is $v\in N(S)$ with $x_v=1$,
  i.e. $v\in V^\prime.$ Since $v\in N(S),$ either $u=v$ or the
  vertices~$u$ and $v$ are adjacent in $G$ by construction of
  $G^\prime$ yielding the claim.
\end{proof} 
Note that we did not prove NP-completeness of \RobustShort. In the
following section, we see that checking a given vector~$x$ for
feasibility is co-NP-complete.


\section{Solving the \RobustOpt}
\label{sec: Solving the Robust Version}
In the previous section we have shown that \RobustOptShort\ is an
NP-hard problem.  As both formulations of the problem as (mixed) integer
linear programs contain a large number of constraints, it is reasonable
to apply constraint generation to obtain a solution,
cf.~\cite{Desaulniers+etal:book,Groetschel:book,Nemhauser+Wolsey:IP:book,Wolsey+Pochet:book}. 

Thus, focusing on the set formulation, at any point in the constraint
generation process, a collection of subsets $\mathcal{S} \subseteq 2^J$ is
given and we solve the relaxed problem obtained by only considering
the constraints corresponding to sets $S\in \mathcal{S}$ in
(IP~\ref{IP: RMSMqC, set, start}).  This problem is called \emph{restricted
  master problem}.  In the \emph{separation step}, given an optimal solution
$\bar{x}$ of the restricted master problem, we are looking for a new
subset $S\subseteq J$ such that the constraint induced by $S$ is not
fulfilled yet, i.e.,
\begin{align*}
  \sum_{i\in N(S)}q\cdot \bar{x}_i < \tilde{d}_S.  
\end{align*}
In the next iteration, $\mathcal{S}$ is updated by adding the newly
found set $S$ and the restricted master problem is solved once more.
If there exists no set $S$ fulfilling the above inequality, we know
that $\bar{x}$ is the optimal solution for \RobustOptShort.
Initially, $\mathcal{S}$ is the empty set yielding the optimal
solution $\bar{x}_i=0$ for all locations $i\in I$ in the restricted master problem.
Analogously, this procedure can be applied to the assignment
formulation (MIP~\ref{IP: ARC}) using an (initially empty) set
$\mathcal{U}''\subseteq\mathcal{U}'$ of extreme scenarios.  The
important step of these methods is an efficient way to solve the
occurring separation problems. These can be formulated as
follows:

\begin{problem}[Separation for \RobustShort, set formulation]
\label{problem: pricing, set}
\mbox{}\\
\noindent\textbf{Instance:} Set of possible locations $I$,
non-negative integers $\bx_i$ for all $i\in I$, set of regions $J$,
non-negative integers $a_j$, $b_j$ with $a_j\leq b_j$ for all $j\in
J$, integer $\Gamma$ satisfying Assumption~\ref{assumption: gamma},
bipartite graph $G=(I\cup J, E)$. 

\noindent\textbf{Question:} Is there a subset $S\subseteq J$ such that 
 $q \cdot \bx(N(S)) < \tilde{d}_S?$
\end{problem}

\begin{problem}[Separation for \RobustShort, assignment formulation]
\label{problem: pricing, assignment}
\mbox{}\\
\noindent\textbf{Instance:} See Problem~\ref{problem: pricing, set}. 

\noindent\textbf{Question:} Is there an extreme scenario $\xi \in
\mathcal{U}^\prime$ such that there is no $y\geq 0$ with  
\begin{align}
\label{eq: pricing, assignment}
\sum_{i\in N(j)}y_{ij}\geq \xi_j \quad\forall j\in J \text{ and }\sum_{j \in N(i)}y_{ij} \leq q \cdot \bx_i \quad\forall i\in I?
\end{align}
\end{problem}

In the following, we concentrate on the analysis of these two problems.
Using Farkas' Lemma (see e.g., \cite{Groetschel:book,Schrijver:book}),
Problem~\ref{problem: pricing, assignment} asks for an extreme
scenario $\xi \in \mathcal{U}^\prime$ such that there are vectors $\mu, \nu \geq 0$ with $\mu_i \geq \nu_j$ for all regions $j\in J$ and locations $i\in N(j)$ and   
\begin{align*}
  \sum_{i\in I} q\cdot \bx_i \cdot \mu_i < \sum_{j\in J}\xi_j\cdot \nu_j.
\end{align*}
Therefore, we can also solve
Problem~\ref{problem: pricing, assignment} by asking for an extreme
scenario $\xi$ such that the optimal objective value of the problem
\begin{mini!}|s|[]
{\mu,\nu}
{\sum_{i\in I} q\cdot \bx_i \cdot \mu_i - \sum_{j\in J}\xi_j\cdot \nu_j}
{\label{LP: pricing}}
{\text{(LP~\ref{LP: pricing})}\quad}
\addConstraint{\mu_i}{\geq \nu_j \quad}{\forall j\in J, i\in N(j)}
\addConstraint{\mu_i, \nu_j}{\geq 0 \quad}{\forall i\in I, j\in J,}
\end{mini!}
is less than zero. Note that the zero vector is feasible here, so that the optimal objective value never exceeds zero.

\begin{definition}
  Let an instance of the separation problem be given.  A set
  $S\subseteq J$ with $q\cdot \bx(N(S)) < \tilde{d}_S$ is called
  \emph{violating} subset.  Analogously, an extreme scenario
  $\xi\in\mathcal{U}'$ such that (LP~\ref{LP: pricing}) has a solution
  $(\mu,\nu)$ with objective value less than zero is called
  \emph{violating} scenario.
\end{definition}

The following Lemma \ref{lemma: pricing} is an easy consequence 
from the equivalence of the Problems \ref{prob: RMSMqC, set} and 
\ref{prob: RMSMqC, assignment}. 
We will nevertheless give a constructive proof. This will enable us to 
find a violating scenario in polynomial time if we are given a violating 
subset and vice versa.

\begin{lemma}
  \label{lemma: pricing}
  Let an instance of the separation problem be given.  Then, there
  exists a violating scenario $\xi \in \mathcal{U}^\prime$ if and only
  if there exists a violating subset $S\subseteq J$. 
\end{lemma}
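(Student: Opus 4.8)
The plan is to prove both implications by reducing each to the non-robust equivalence of Lemma~\ref{lemma:ipequivalence}, exploiting the identity $\tilde{d}_S=\max_{\xi\in\mathcal{U}}\xi(S)$ established just before Problem~\ref{prob: RMSMqC, set}. First I would record the common bridge. For a fixed scenario $\xi$, the system in~\eqref{eq: pricing, assignment} has no solution $y\geq 0$ precisely when (LP~\ref{LP: pricing}) attains a value strictly below zero; this is exactly the Farkas dichotomy already spelled out, so "$\xi$ is violating" means "no feasible $y$". By Lemma~\ref{lemma:ipequivalence} applied with demand vector $\xi$, such a $y$ exists if and only if $\bx$ is feasible for (IP~\ref{ip:subset})$(\xi)$. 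Hence $\xi\in\mathcal{U}'$ is a \emph{violating scenario} if and only if $\bx$ is \emph{infeasible} for (IP~\ref{ip:subset})$(\xi)$, i.e.\ there is some $S\subseteq J$ with $q\cdot\bx(N(S))<\xi(S)$.

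For the direction from a violating subset to a violating scenario, suppose $S$ satisfies $q\cdot\bx(N(S))<\tilde{d}_S$. I would exhibit an extreme scenario $\xi^\ast\in\mathcal{U}'$ with $\xi^\ast(S)=\tilde{d}_S$, following the two cases of $\tilde{d}_S=\min\{b(S),\Gamma-a(J\setminus S)\}$: if $b(S)+a(J\setminus S)\leq\Gamma$, set $\xi^\ast_j=b_j$ on $S$ and raise the coordinates on $J\setminus S$ so the total reaches $\Gamma$; otherwise set $\xi^\ast_j=a_j$ on $J\setminus S$ and distribute the remaining $\Gamma-a(J\setminus S)$ over $S$ within the bounds, which is possible since $a(S)\leq\Gamma-a(J\setminus S)<b(S)$. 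In both cases $\xi^\ast(J)=\Gamma$ (validity following from Assumption~\ref{assumption: gamma}) and, since altering $J\setminus S$ leaves $\xi^\ast(S)$ unchanged, $\xi^\ast(S)=\tilde{d}_S$. Then $q\cdot\bx(N(S))<\tilde{d}_S=\xi^\ast(S)$, so by the bridge $\xi^\ast$ is a violating scenario.

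The converse direction is where the real work lies, because a violating scenario only \emph{guarantees} the existence of some infeasible set $S$, whereas here I must produce it explicitly and in polynomial time. I would reuse the flow network $H$ from the proof of Lemma~\ref{lemma:ipequivalence}, now with source capacities $q\cdot\bx_i$ on the arcs $(s,i)$ and sink capacities $\xi_j$ on the arcs $(j,t)$. By the bridge, $\bx$ is infeasible for (IP~\ref{ip:subset})$(\xi)$, so the maximum $s$-$t$-flow value is strictly below $\sum_{j\in J}\xi_j=\Gamma$ (using that a violating scenario is extreme). I would take a finite minimum cut $(S_{\mathrm{cut}},T_{\mathrm{cut}})$ and set $S:=J\setminus S_{\mathrm{cut}}$; the cut estimate from the proof of Lemma~\ref{lemma:ipequivalence} gives $c(S_{\mathrm{cut}},T_{\mathrm{cut}})\geq\sum_{j\in J\cap S_{\mathrm{cut}}}\xi_j+\sum_{i\in N(S)}q\cdot\bx_i$. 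Comparing this with $\Gamma=\sum_{j\in J\cap S_{\mathrm{cut}}}\xi_j+\xi(S)$ and cancelling the common term yields $q\cdot\bx(N(S))<\xi(S)\leq\tilde{d}_S$, so $S$ is a violating subset.

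I expect the \textbf{main obstacle} to be exactly this extraction step: converting the purely existential output of Farkas' Lemma into a constructive minimum-cut certificate for a concrete set $S$. This is what makes the equivalence algorithmically useful for the constraint-generation scheme rather than merely a logical consequence of the equivalence of Problem~\ref{prob: RMSMqC, set} and Problem~\ref{prob: RMSMqC, assignment}; the remaining bookkeeping (the two-case construction of $\xi^\ast$ and the cut arithmetic) is routine once the flow model and the identity $\tilde{d}_S=\max_{\xi\in\mathcal{U}}\xi(S)$ are in place.
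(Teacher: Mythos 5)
Your proof is correct, but for the harder direction it takes a genuinely different route from the paper's. The paper never leaves the dual side: it takes the Farkas certificate $(\mu,\nu)$ of negative value for (LP~\ref{LP: pricing}), invokes total unimodularity to assume integrality, and then iteratively lowers the top level set $J^\star=\{j\colon \nu_j=\nu^\star\}$ --- either $J^\star$ is already a violating subset, or $\mu$ on $N(J^\star)$ and $\nu$ on $J^\star$ can be decreased by one while keeping the objective negative --- until a binary certificate remains, from which $S=\{j\colon\nu_j=1\}$ is read off. You instead route everything through Lemma~\ref{lemma:ipequivalence}: by the Farkas dichotomy the paper states before (LP~\ref{LP: pricing}), a violating scenario is exactly a demand vector $\xi$ for which $\bx$ is infeasible in (IP~\ref{ip:subset})$(\xi)$, and you extract the violating set from a minimum cut in the flow network $H$, via the same cut estimate used in the proof of Lemma~\ref{lemma:ipequivalence}; your cut arithmetic is sound, since $c(S_{\mathrm{cut}},T_{\mathrm{cut}})\geq \sum_{j\in J\cap S_{\mathrm{cut}}}\xi_j + q\cdot\bx(N(J\setminus S_{\mathrm{cut}}))$ compared against $\sum_{j\in J}\xi_j$ cancels to the violated inequality, and $\xi(S)\leq\tilde{d}_S$ holds because $\tilde{d}_S=\max_{\xi\in\mathcal{U}}\xi(S)$. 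The easy direction --- building the extreme scenario $\xi^\ast$ with $\xi^\ast(S)=\tilde{d}_S$ by the two cases on $b(S)+a(J\setminus S)$ versus $\Gamma$ --- coincides with the paper's, except that the paper also writes down the binary certificate $(\mu,\nu)$ explicitly, while you obtain its existence indirectly through the same Farkas equivalence. Both proofs are constructive and polynomial, which is the point the paper stresses: the paper's argument buys a direct certificate-to-certificate conversion that only rounds a given $(\mu,\nu)$ without solving any auxiliary problem, whereas yours replaces the total-unimodularity and level-reduction argument with a single max-flow computation and reuses machinery already established, making the lemma an almost immediate corollary of Lemma~\ref{lemma:ipequivalence} together with the identity $\tilde{d}_S=\max_{\xi\in\mathcal{U}}\xi(S)$.
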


\begin{proof}
  Let $\xi \in \mathcal{U}^\prime$ and $(\mu,\nu)$ such that
  $(\mu,\nu)$ is feasible for (LP~\ref{LP: pricing}) with objective
  value
  $\sum_{i\in I} q\cdot \bx_i \cdot \mu_i - \sum_{j\in J}\xi_j\cdot
  \nu_j < 0$ be given.  Since the corresponding constraint matrix is
  totally unimodular, we can assume $\mu$ and $\nu$ to only contain
  integral values.  First of all, suppose
  $\nu^\star \coloneqq \max\{\nu_j\colon j\in J\} > 1$ and consider the index set
  $J^\star\coloneqq \{j\in J\colon \nu_j = \nu^\star\}$.  Then,
  $\mu_i \geq \nu^\star$ for all $i\in N(J^\star)$ and without loss of
  generality we can assume that even equality holds.  Now, we obtain
  \begin{align*}
    \sum_{i\in N(J^\star)} q \cdot \bx_i \cdot \mu_i - \sum_{j\in
    J^\star}\xi_j \cdot \nu_j = \left(\sum_{i\in N(J^\star)} q \cdot
    \bx_i - \sum_{j\in J^\star}\xi_j \right) \cdot \nu^\star \eqqcolon A \cdot
    \nu^\star. 
  \end{align*}
  If $A<0$ we can choose $S=J^\star$ and have found a solution for the
  separation problem of the set formulation since
  \begin{align*}
    0 > A &=\sum_{i\in N(J^\star)} q \cdot \bx_i - \sum_{j\in J^\star}\xi_j
          \geq \sum_{i\in N(J^\star)} q \cdot \bx_i - \min\left\{b(J^\star),\Gamma -a(J\setminus J^\star)\right\}\\
          &= \sum_{i\in N(J^\star)} q \cdot \bx_i - \tilde{d}_{J^\star}.
  \end{align*}
  Otherwise, if $A\geq 0$, we can decrease $\mu_i$ for all
  $i\in N(J^\star)$ and $\nu_j$ for all $j\in J^\star$ by one and have
  found another feasible solution $(\mu',\nu')$ with objective value
  smaller than $0$ since
  \begin{align*}
    0 &> \sum_{i\in I}q\cdot \bx_i\cdot\mu_i - \sum_{j\in J}\xi_j \cdot \nu_j\\ 
      &= \left( \sum_{i\in N(J^\star)}q\cdot \bx_i - \sum_{j\in J^\star}\xi_j\right) \cdot \nu^\star + \sum_{i\notin N(J^\star)}q \cdot \bx_i\cdot\mu_i - \sum_{j\notin J^\star}\xi_j\cdot \nu_j\\
      &\geq \left( \sum_{i\in N(J^\star)}q\cdot \bx_i - \sum_{j\in J^\star}\xi_j\right) \cdot \left(\nu^\star-1\right) + \sum_{i\notin N(J^\star)}q \cdot \bx_i\cdot\mu_i - \sum_{j\notin J^\star}\xi_j \cdot \nu_j\\
      &= \sum_{i\in I}q\cdot \bx_i\cdot\mu'_i - \sum_{j\in J}\xi_j\cdot\nu'_j.
  \end{align*}
  Thus, repeating this argument, we either find the desired violating
  set $S$ or we end with a binary solution $\mu_i,\nu_j\in \{0,1\}$
  for all $i,j$.  In the latter case set $S=\{j\in J:\nu_j=1\}$.
  Then, it holds true that $\mu_i\geq 1$ for all $i\in N(S)$.  With no
  loss of generality, we can assume $\mu_i=1$ for $i\in N(S)$ and
  $\mu_i=0$ else since the objective value only becomes smaller.  This
  yields
  \begin{align*}
    0 > \sum_{i \in I} q \cdot \bx_i \cdot \mu_i - \sum_{j\in J}\xi_j \cdot \nu_j
    = \sum_{i\in N(S)} q \cdot \bx_i - \sum_{j \in S}\xi_j
    \geq \sum_{i\in N(S)} q \cdot \bx_i - \tilde{d}_S
  \end{align*}
  and we have found the desired set~$S$.

  On the other hand, given $S\subseteq J$ with
  $\sum_{i\in N(S)} q \cdot \bx_i < \tilde{d}_S$, we choose an extreme
  scenario $\xi \in \mathcal{U}^\prime$ with
  $\sum_{j\in S}\xi_j=\tilde{d}_S$:
  \begin{itemize}
  \item[-] If $b(S) + a(J\setminus S) \leq \Gamma$, we set
    $\xi_j=b_j$ for all $j\in S$. Since $a(J\setminus
    S)\leq\Gamma - b(S)$ and $b(J\setminus S)\geq \Gamma - b(S)$
    we can choose the demands in the remaining regions $j\in
    J\setminus S$ so that $\sum_{j\in J}\xi_j=\Gamma.$ 
  \item[-] If $b(S) + a(J\setminus S) > \Gamma$, we set
    $\xi_j=a_j$ for all $j\in J\setminus S$. Since $a(S)\leq
    \Gamma - a(J\setminus S)$ and $b(S)>\Gamma-a(J\setminus S)$
    we can choose the demands in the remaining regions $j\in S$
    so that $\sum_{j\in J}\xi_j=\Gamma.$ 
  \end{itemize}
  Finally, set $\nu_j=1$ for all $j\in S$ as well as $\mu_i=1$ for all $i\in N(S)$.
  All other variables are set to zero.
  This yields the desired extreme scenario $\xi$ and the solution $(\mu,\nu)$ for the separation problem in the assignment formulation with objective value
  \begin{align*}
    \sum_{i\in I}q\cdot \bx_i\cdot\mu_i - \sum_{j\in J}\xi_j\cdot\nu_j =
    \sum_{i\in N(S)} q \cdot \bx_i - \tilde{d}_S < 0. 
  \end{align*}
  This completes the proof.
\end{proof}

Lemma~\ref{lemma: pricing} allows to switch between both separation
problems as the proof reveals how to construct a violating scenario
from a given violating subset and vice versa.  Furthermore, concerning
complexity the problems are equally hard since the transformations can
be computed in polynomial time.\ Recall that the polyhedron of
feasible solutions corresponding to (LP~\ref{LP: pricing}) is integral,
even if the constraints $\mu_i,\nu_j\leq 1\ \forall i\in I, j\in J$ are added,
since this does not destroy total unimodularity.

In the following, we show that Problem~\ref{problem: pricing, set} is
NP-complete.  To prove this fact, we additionally need the definition
of the \emph{Knapsack} problem, cf.~\cite{Garey}. Here, we are given a finite set~
$U$, for each element $u \in U$ a size $s(u)\in \mathbb{Z}_{>0}$ and a
profit $p(u)\in \mathbb{Z}_{>0}$, and two positive integers $B$ and
$K$. The question is whether there exists a subset
$U^\prime\subseteq U$ such that $\sum_{u\in U '}s(u)\leq B$ as well as
$\sum_{u\in U'}p(u)\geq K$. In the following we write $s(U')$,
respectively $p(U')$, to refer to the sum over the sizes/profits of
the single elements in~$U'$.

%

\begin{lemma}
  \label{lemma: pricing NP}
  For fixed $q\in\Z_{>0}$, Problem~\ref{problem: pricing, set} is NP-complete.
\end{lemma}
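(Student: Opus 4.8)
The plan is to show membership in NP and then NP-hardness by a reduction from \emph{Knapsack}. Membership is immediate: a subset $S\subseteq J$ is a certificate, and verifying $q\cdot\bx(N(S)) < \tilde{d}_S$ only requires computing $N(S)$, the sums $\bx(N(S))$, $b(S)$, $a(J\setminus S)$ and the minimum defining $\tilde{d}_S$, all in polynomial time.

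For hardness, my key observation is that, since $\tilde{d}_S=\min\{b(S),\,\Gamma-a(J\setminus S)\}$, a subset $S$ is violating if and only if \emph{both} $q\cdot\bx(N(S)) < b(S)$ and $q\cdot\bx(N(S)) < \Gamma-a(J\setminus S)$ hold. This splits the single violating condition into two linear inequalities over the chosen subset, which I would match to the two requirements $s(U')\le B$ and $p(U')\ge K$ of a \emph{Knapsack} instance $(U,s,p,B,K)$. Concretely, I would introduce one region $j_u$ per item $u\in U$ (so that subsets $S\subseteq J$ correspond bijectively to subsets $U'\subseteq U$), a private location $i_u$ adjacent only to $j_u$, and a single \emph{hub} location $i_0$ adjacent to \emph{every} region. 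Setting $a_j=0$ for all $j$, $\bx_{i_u}=s(u)$, $b_{j_u}=q\,(s(u)+p(u))$, $\bx_{i_0}=K-1$ and $\Gamma=q\,(B+K)$, one computes for every nonempty $S$ with associated $U'\neq\emptyset$ that $q\cdot\bx(N(S))=q(K-1)+q\,s(U')$, while $b(S)=q\,s(U')+q\,p(U')$ and $\Gamma-a(J\setminus S)=q(B+K)$. Dividing the two inequalities above by $q$ and using integrality of $s(U')$ and $p(U')$, they reduce to exactly $p(U')\ge K$ and $s(U')\le B$; since the empty set is never violating, a violating subset exists if and only if the \emph{Knapsack} instance is a yes-instance.

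The role of the hub location $i_0$ is the crux, and the step I expect to require the most care. Without it, both split inequalities would be purely additive over $U'$, with one of them forced to compare against the fixed threshold $0$, so a genuine lower bound of the form $p(U')\ge K$ could not be encoded; the constant contribution $q\,\bx_{i_0}=q(K-1)$ that the hub adds to $q\cdot\bx(N(S))$ for \emph{every} nonempty $S$ is precisely what turns $q\cdot\bx(N(S)) < b(S)$ into the threshold condition $p(U')\ge K$. The remaining technical point is to make the constructed instance respect Assumption~\ref{assumption: gamma}, which I would handle by standard polynomial preprocessing of the \emph{Knapsack} instance: discard every item with $s(u)>B$; answer \emph{yes} at once if some remaining single item already has $p(u)\ge K$ (so that afterwards $p(u)\le K-1$ and hence $b_{j_u}<\Gamma$); and resolve the trivial cases $s(U)\le B$ and $p(U)<K$ directly, which forces $s(U)+p(U)>B+K$ and thus $\sum_{j}a_j=0\le\Gamma\le\sum_{j}b_j$. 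After this preprocessing the reduction is polynomial and yields a valid separation instance, so that Problem~\ref{problem: pricing, set} is NP-complete.
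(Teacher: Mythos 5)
Your proof is correct and follows essentially the same reduction as the paper's: an identical gadget (a private location per item with $\bar{x}$-value $s(u)$, a hub location with $\bar{x}$-value $K-1$, regions with $b_{j_u}=q\,(s(u)+p(u))$, $a\equiv 0$, and $\Gamma=q\,(B+K)$) and the same arithmetic splitting $\tilde{d}_S=\min\{b(S),\,\Gamma-a(J\setminus S)\}$ into the two Knapsack conditions $p(U')\geq K$ and $s(U')\leq B$. The only deviations are minor: you drop the paper's dummy region of upper bound $0$ (which in the paper merely forces a removal step in the backward direction), and your explicit preprocessing to enforce Assumption~\ref{assumption: gamma} is a careful touch the paper leaves implicit.
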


\begin{proof}
  Clearly, Problem~\ref{problem: pricing, set} is contained in NP
  since given any subset $S\subseteq J$ we can check in polynomial
  time whether the stated inequality is satisfied.

  We show that \emph{Knapsack} reduces to Problem~\ref{problem: pricing, set}
  in polynomial time.  Let an arbitrary instance of \emph{Knapsack} be given
  with a set $U=\{1,\ldots,n\}$, sizes $s(u)$ and profits $p(u)$
  associated with each element $u\in U$ and two integers $B,K\in\Z_{>0}$. We define a bipartite graph
  $G=(I \cup J,E)$ with $I=\left\{1,\ldots,n,2n+1\right\}= U \cup \left\{2n+1\right\}$, $J=\left\{n+1,\ldots,2n,2n+2\right\}$ and
  \begin{align*}
  E=&\left\{\{u,n+u\},\{2n+1,n+u\}, \{u,2n+2\} \text{ for }u=1,\ldots,n\right\}\\
  &\cup\left\{\{2n+1,2n+2\}\right\}.
  \end{align*}
  Furthermore, we set $\bx(u)=s(u)$ and $b(n+u)=q\cdot (p(u)+s(u))$ for all $u\in U$. 
	Further, we set $\bx(2n+1)=K-1$, $b(2n+2)=0$, $\Gamma = q\cdot(B+K)$.
	Finally, we set $a(j)=0$ for all $j\in J$.

  Now given a solution $U^\prime \subseteq U \subseteq I$ of \emph{Knapsack}
  with $s(U^\prime)\leq B$ and $p(U^\prime)\geq K$, we choose $S =
  \{n+u\colon u \in U^\prime\}\subseteq J$. 
  Then, $S$ is nonempty since $U^\prime \neq \emptyset$ and we have
  $\nicefrac{\Gamma}{q} - \bx(U^\prime) = B + K - s(U^\prime) \geq B + K - B = K$
  and  
  \begin{align*}
    \frac{b(S)}{q} - \bx(U^\prime) &= \sum_{u\in U^\prime}\frac{b(n+u)}{q} - \sum_{u \in U^\prime}\bx(u) = \sum_{u\in U^\prime}p(u) + s(u) - \sum_{u \in U^\prime}s(u)\\
                       &=  \sum_{u\in U^\prime}p(u)  \geq K,
  \end{align*}
  yielding $\min\left\{\nicefrac{b(S)}{q}, \nicefrac{\Gamma}{q}\right\} - \bx(U^\prime) \geq K$.
  Subtracting $K-1=\bx(2n+1)$ on both sides we obtain:
  \begin{align*}
   & \min\left\{\nicefrac{b(S)}{q}, \nicefrac{\Gamma}{q}\right\} - \bx(U^\prime) - \bx(2n+1) \geq 1 > 0\\
    \Leftrightarrow \quad & \min\left\{\nicefrac{b(S)}{q}, \nicefrac{\Gamma}{q}\right\} - \bx(N(S)) \geq 1 > 0,
  \end{align*}
  i.e. $\min\left\{b(S),\Gamma\right\} - q\cdot \bx(N(S))>0$.
  Thus, $S$ is a solution for Problem~\ref{problem: pricing, set}.

  On the other hand, let $S\subseteq J$ be a solution for 
  Problem~\ref{problem: pricing, set} with the property 
  $\min\{b(S),\Gamma\} > q\cdot \bx(N(S))$.  Then, $S$ must contain an element
  of the form $n+u$ for some $u\in U$, since if $S=\{2n+2\}$ the
  inequality is not fulfilled.  Set $S^\prime = S\setminus \left\{2n+2\right\}$
  and $U^\prime = \{u: n+u \in S^\prime\}\subseteq U$.  Our aim is to
  show that $U^\prime$ is a solution for \emph{Knapsack}.  We have
  $S^\prime\neq\emptyset$ and $N(S^\prime)= U^\prime \cup \{2n+1\}$.
  Moreover, it also holds true that $\min\{b(S^\prime),\Gamma\} > q\cdot \bx(N(S^\prime))$,
  since $b(2n+2)=0$, $\bx(N(S^\prime)) \leq \bx(N(S))$ and $q>0$.
  Reformulating the right hand side we get $\bx(N(S^\prime)) = \bx(U^\prime) + \bx(2n+1) = \bx(U^\prime) + K - 1$,
  so that in total we have $\min\{b(S^\prime),\Gamma\} - q\cdot \bx(U^\prime) > q\cdot(K - 1)$,
  i.e., $\min\{\nicefrac{b(S^\prime)}{q},\nicefrac{\Gamma}{q}\} - \bx(U^\prime) \geq K$.  When
  inserting the above definitions this expression becomes
  \begin{equation}
    \min\left\{p(U^\prime)+s(U^\prime),B + K \right\} - s(U^\prime) \geq K.
    \label{eq: blubb}
  \end{equation}
  Now, we need to differentiate between two cases: If $p(U^\prime)+s(U^\prime) \leq B+K$, \eqref{eq: blubb} yields $p(U^\prime) = p(U^\prime) + s(U^\prime) - s(U^\prime) \geq K$
     and $s(U^\prime) \leq B+K - p(U^\prime) \leq B+K-K
      =B.$ If $p(U^\prime)+s(U^\prime)> B+K$, \eqref{eq: blubb} yields $B + K - s(U^\prime) \geq K$,
          i.e., $s(U^\prime) \leq B$. Furthermore, $p(U^\prime) > B + K- s(U^\prime) \geq B + K - B = K$.
  Thus, $U^\prime$ is a solution for \emph{Knapsack}.
\end{proof}

\begin{corollary}
	Problem~\ref{problem: pricing, assignment} is NP-complete.
\end{corollary}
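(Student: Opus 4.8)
The plan is to derive the corollary directly from Lemma~\ref{lemma: pricing NP} together with the equivalence established in Lemma~\ref{lemma: pricing}. The key observation is that Problem~\ref{problem: pricing, assignment} and Problem~\ref{problem: pricing, set} operate on \emph{the same} instance data (the instance of the assignment separation problem is declared to be an instance of the set separation problem), and by Lemma~\ref{lemma: pricing} a violating scenario exists if and only if a violating subset exists. Thus the two decision problems return \emph{yes} on exactly the same instances, and NP-completeness will transfer via the identity map.

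First I would argue membership in NP. Although the question is phrased through the \emph{non-existence} of an assignment $y$, this does not push the problem into co-NP. Taking a candidate extreme scenario $\xi\in\mathcal{U}^\prime$ as the certificate, one checks in polynomial time that $\xi\in\mathcal{U}^\prime$ (that is, $\xi_j\in[a_j,b_j]$ for all $j$ and $\sum_{j\in J}\xi_j=\Gamma$), and then verifies that no feasible $y\ge 0$ exists for~\eqref{eq: pricing, assignment}. The latter is a linear feasibility question, equivalent to the statement that (LP~\ref{LP: pricing}) attains optimal value strictly less than zero; both can be decided in polynomial time by linear programming. Hence $\xi$ is a polynomially checkable certificate and the problem lies in NP.

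For NP-hardness I would use the identity reduction from Problem~\ref{problem: pricing, set}. An instance of Problem~\ref{problem: pricing, assignment} is literally an instance of Problem~\ref{problem: pricing, set}, and by Lemma~\ref{lemma: pricing} it admits a violating scenario $\xi\in\mathcal{U}^\prime$ precisely when it admits a violating subset $S\subseteq J$. Since Problem~\ref{problem: pricing, set} is NP-complete by Lemma~\ref{lemma: pricing NP}, and in particular NP-hard, the identity map is a valid polynomial-time reduction onto Problem~\ref{problem: pricing, assignment}. Combined with membership in NP, this yields NP-completeness.

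The only point requiring genuine care is the NP-membership argument: because the question asks for a scenario for which \emph{no} assignment exists, one must observe that certifying this non-existence is itself a polynomial-time LP feasibility check (equivalently, reading off a strictly negative optimum of (LP~\ref{LP: pricing})), so that the superficially co-NP flavor of the condition is harmless. Everything else is an immediate consequence of the two preceding lemmas, which is why the statement is phrased as a corollary rather than a theorem.
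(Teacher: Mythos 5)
Your proposal is correct and follows essentially the same route as the paper: NP-hardness is transferred from Problem~\ref{problem: pricing, set} via Lemma~\ref{lemma: pricing NP} together with the equivalence of Lemma~\ref{lemma: pricing}, and membership in NP is certified by an extreme scenario $\xi$ whose validity is checked by solving a linear program. Your explicit remarks that the two separation problems share identical instance data (so the reduction is the identity map) and that the apparently co-NP flavor of the non-existence condition is defused by polynomial-time LP feasibility checking merely spell out details the paper leaves implicit.
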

\begin{proof}
	Clearly the preceding Lemma~\ref{lemma: pricing NP} and
  Lemma~\ref{lemma: pricing} imply NP-hardness for
  Problem~\ref{problem: pricing, assignment}.
	Furthermore, Problem~\ref{problem: pricing, 
  assignment} is contained in NP since we only need to solve a
  linear program when given a scenario $\xi$ as a certificate,
  yielding NP-completeness in total.
\end{proof}

Moreover, Lemma~\ref{lemma: pricing NP} reveals that checking whether
a given vector~$\bx$ is feasible for a given instance of \RobustShort\
is co-NP-complete: $\bx$ is feasible if and only if the answer to the
separation problem is ``no''.  

Therefore, we start with a BIP formulation for the set formulation to
solve the separation problem.  Let $z_j$ be one if region $j$ is
contained in $S$ and zero otherwise.  Furthermore, let $y_i$ be one if
$i \in N(S)$ and zero else.  It is easy to see that
Problem~\ref{problem: pricing, set} can be formulated as the following
binary program:
\begin{mini!}|s|[]
{y,z,d}
{\sum_{i\in I}q\cdot \bx_i\cdot y_i - d}{}
{\label{pricing set formulation}}
\addConstraint{d}{\leq \sum_{j\in J}{b_j\cdot z_j}}{}
\addConstraint{d}{\leq \Gamma - \sum_{j\in J}a_j + \sum_{j\in J}{a_j\cdot z_j}}{}
\addConstraint{y_i}{\geq z_j \quad}{\forall j\in J,\ i \in N(j)}
\addConstraint{y_i, z_j}{\in \{0,1\} \quad}{\forall i\in I,\ j\in J},
\end{mini!}
where $\bx$ is the given fixed vector which we wish to test for
feasibility.  This program can be simplified by setting
$\Gamma^\prime\coloneqq\Gamma - \sum_{j\in J}a_j$ and letting $k$ be the
maximum number of regions a location can serve, i.e., the maximum
degree among the vertices~$I$ in the bipartite graph $G=(I\cup J,E)$.
Then, the above formulation can be rewritten:
\begin{mini!}|s|[]
{y,z,d}
{\sum_{i\in I}q\cdot \bx_i\cdot y_i - d}
{\label{pricing set formulation_2}}
{\text{(MIP~\ref{pricing set formulation_2})}\quad}
\addConstraint{d}{\leq \sum_{j\in J}{b_j\cdot z_j}}{}
\addConstraint{d}{\leq \Gamma^\prime + \sum_{j\in J}{a_j\cdot z_j}}{}
\addConstraint{k\cdot y_i}{\geq \sum_{j\in N(i)}z_j \quad}{\forall i\in I}
\addConstraint{y_i, z_j}{\in \{0,1\} \quad}{\forall i\in I,\ j\in J.}
\end{mini!}
Concerning the assignment formulation, we consider (MIP~\ref{pricing
  LP quadratic}) obtained from (LP~\ref{LP: pricing}) by including the
constraints on~$\xi$.  Note that the objective becomes now a quadratic
and non-convex function since $\xi_j$ has changed into a variable.
\begin{mini!}|s|[]
{\mu,\nu,\xi}
{\sum_{i\in I} q\cdot \bx_i \cdot \mu_i - \sum_{j\in J}\xi_j\cdot \nu_j}
{\label{pricing LP quadratic}}
{\text{(MIP~\ref{pricing LP quadratic})}\quad}
\addConstraint{\mu_i}{\geq \nu_j \quad}{\forall i\in I, j\in N(i) \label{first}}
\addConstraint{a_j \leq \xi_j}{\leq b_j \quad}{\forall j\in J}
\addConstraint{\sum_{j\in J}\xi_j}{= \Gamma}{}
\addConstraint{\mu_i, \nu_j}{\geq 0 \quad}{\forall i\in I, j\in J}
\addConstraint{\xi_j}{\in \mathbb{Z}\quad}{\forall j\in J.}
\end{mini!}
When forcing $\nu_j\in\{0,1\}$ (which we can do without loss of
generality, see the proof of Lemma~\ref{lemma: pricing}), we can use the Big-M
method to regain a linear objective.  Once $\nu_j\in\{0,1\}$, the
variables $\mu_i$ can be assumed to be in~$\{0,1\}$ without loss of
generality.  Then, the interpretation of the variables $\mu_i$ and
$\nu_j$ equals that of $y_i$ and $z_j$ and
constraint~\eqref{first} can be simplified in the same manner as
before. In total we obtain:
\begin{mini!}|s|[]
{\mu,\nu,\omega,\xi}
{\sum_{i\in I} q\cdot \bx_i \cdot \mu_i - \sum_{j\in J}\omega_j}
{\label{pricing LP big M}}
{\text{(IP~\ref{pricing LP big M})}\quad}
\addConstraint{k\cdot \mu_i}{\geq \sum_{j\in N(i)}\nu_j \quad}{\forall i\in I}
\addConstraint{\omega_j}{\leq \xi_j\quad}{\forall j\in J}
\addConstraint{\omega_j}{\leq \Gamma\cdot \nu_j\quad}{\forall j\in J}
\addConstraint{a_j \leq \xi_j}{\leq b_j \quad}{\forall j\in J}
\addConstraint{\sum_{j\in J}\xi_j}{= \Gamma}{}
\addConstraint{\mu_i, \nu_j}{\in \{0,1\} \quad}{\forall i\in I, j\in J}
\addConstraint{\xi_j}{\in \mathbb{Z}\quad}{\forall j\in J.}
\end{mini!}
By Lemma~\ref{lemma: pricing} it suffices to solve (IP~\ref{pricing LP
  big M}) since the optimal objective function value of
(MIP~\ref{pricing LP quadratic}) is less than zero if and only if that
of (IP~\ref{pricing LP big M}) is less than zero.  In the optimal
solution of (IP~\ref{pricing LP big M}), the variables $\xi_j$ will be
chosen in order to maximize the term $\sum_{j\in J}\omega_j$ which is
the same as $\tilde{d}_S$ using the new interpretation of $\nu_j$.
Thus, the optimal objective function values of (MIP~\ref{pricing set
  formulation_2}) and (IP~\ref{pricing LP big M}) coincide and we can
also use the proof of Lemma~\ref{lemma: pricing} to solve
(IP~\ref{pricing LP big M}): In the first step we solve
(MIP~\ref{pricing set formulation_2}) yielding the optimal solution
$(y,z,d)$ with objective function value $\Theta$.  If $\Theta=0$, we
obtain the optimal solution of (IP~\ref{pricing LP big M}) by setting
$\mu=\nu=0$ and choosing an arbitrary extreme scenario
$\xi\in\mathcal{U}'$ (which is also the optimal solution of
(MIP~\ref{pricing LP quadratic})).  If $\Theta<0$, we set $\mu=y$ and
$\nu=z$ and choose the extreme scenario $\xi$ depending on whether
$b(S)+a(J\setminus S)$ exceeds $\Gamma$ or not where
$S=\{j\in J\colon z_j=1\}$ (cf.\ proof of Lemma~\ref{lemma: pricing}). On the other hand, 
it is easy to see that a solution~$(\mu,\nu,\xi)$ of~(IP~\ref{pricing LP big M}) 
translates to a solution~$(y,z,d)$ of~(MIP~\ref{pricing set formulation_2}).

\section{Computational results}
\label{sec: Computational Results}
After having analyzed \RobustOpt\ theoretically, in this section we 
present some computational results with $q$ being fixed to three exemplarily.  
The results consist of two main parts: In the first part randomly 
created instances are considered and analyzed, whereas in the second part 
we model a real world problem as \RobustOptShortT\ and display computational 
results based on real world data.  For all instances of \RobustOptShortT, 
we apply both solution approaches based on constraint generation as 
introduced in Section \ref{sec: Solving the Robust Version}.
The solution approach based on the \emph{assignment formulation}, cf.~(MIP~\ref{IP: ARC}), is referred 
to as \emph{asf} while the approach using the \emph{set formulation}, cf.~(IP~\ref{IP: RMSMqC, set, start}), is referred 
to as \emph{setf}. 
For the separation of \emph{asf} we used (IP \ref{pricing LP big M}) and for the 
separation of \emph{setf} we used (MIP~\ref{pricing set formulation_2}). 
We also tested combining the separation approaches using Lemma~\ref{lemma: pricing}, 
but the difference of the running times was neglectably small.

To investigate the price of robustness, we compare the objective value of a solution 
of \RobustOptShortT\ to the objective value of an \emph{average solution}. We obtain the objective value of the 
average solution by randomly choosing a fixed number of possible
scenarios with total demand~$\Gamma$, solving the corresponding non-robust version and determining the 
median of all these objective values. 
Furthermore, we compare the robust solution to the 
solution in which the worst case 
in all regions is assumed, i.e., the number of clients in each region is set 
to the upper bound. We refer to this solution by \emph{worst case solution}.
The objective value of the robust version will be
significantly smaller for most instances than the one of the worst case 
solution.
Although this comparison may seem irrelevant as the 
worst case scenario does not even exist in the robust concept,
the comparison does indeed give insight: Fix for the moment
one location $i\in I$. If the total demand of its neighborhood $N(i)$ is at most
$\Gamma$, there exists a scenario (that needs to be covered) in which 
the demand of each region in $N(i)$ is at its upper bound. If this situation
holds for all locations $i$, it is possible that the worst case is in some
sense simulated by the constraints arising from such scenarios. As an extreme example
regard the case where the neighborhoods of all locations are disjoint. In this case it holds
true that the robust solution value is the same as the worst case solution value, whereas
the average case solution value may be significantly smaller.
Additionally, we analyze the running times of both approaches \emph{asf} 
and \emph{setf}.

To solve the integer linear and mixed integer linear programs, 
the Gurobi Optimizer~8.01 \cite{gurobi} with the Python Interface 
(Python Software Foundation, \texttt{https://www.python.org}) was used.  
All computations were done on a machine with an Intel(R) Xeon(R) CPU E5-2690 0 @ 2.90GHz,
16~cores and 192 GB main memory. The operation system is Ubuntu 64-Bit. 
We permitted each solution approach to use four threads. Further, for the real-world
instances, we aborted (if not finished) the computation after $15$~min wall-clock time and
for the random instances after $2$~min. Computation times are all processing times given in seconds. In the following
two sections, we explain the creation of the instances for the random case and the real world case. Furthermore,
we present and interpret some interesting results.

\subsection{Random instances}
Our random instances are created as follows: We fix the number of regions to $100$ and choose the number of locations from the set $\{10, 20, 30\}$.  The bipartite graph of the instance with edge probability~$p$ is then generated in two steps. To avoid infeasible instances we first randomly choose a location for each region, so that the demand of each region can be covered. In a second step, we add each possible remaining edge independently with  probability $\frac{|I|p - 1}{|I|-1}$ such that the expected number of edges in the created graph is exactly $|I||J|p$. The lower bound $a_j$ for the number of clients in each region is picked uniformly at random from the fixed discrete interval $[0,k_1]$. To obtain the corresponding upper bound $b_j$ a random integer taken from the interval $[1,k_2]$ is added to $a_j$ in a second step. Furthermore, we define the bound~$\Gamma$ to be
\begin{align*}
\Gamma =  \sum_{j\in J} a_j + \left\lfloor d \cdot (\sum_{j\in J}b_j - \sum_{j\in J}a_j) \right\rceil
\end{align*}
for $d\in \left\{\nicefrac{i}{10}, 0 \leq i\leq 10\right\}$. Thus, for $d=0$, the robust solution corresponds to the best case solution since $\mathcal{U}=\left\{(a_1,\dots, a_{|J|})\right\}$ and, for $d=1$, it corresponds to the worst case solution ($\mathcal{U}=\left\{(b_1,\dots,b_{|J|})\right\}$). In the sequel, we refer to $d$ as \emph{gamma factor}.

\begin{table}
	\centering
	\begin{tabular}{cl}
		\toprule
		$|I|$       & $10, 20, 30$ \\
		$p$         & $0.1, 0.2, 0.3$ \\
		$\left(k_1,k_2\right)$ & $(0,1), (10,10), (10, 50), (10, 100), (50, 50), (100, 100)$ \\ 
		$d$         & $0, 0.1, 0.2, 0.3, 0.4, 0.5, 0.6, 0.7, 0.8, 0.9, 1$ \\
		\bottomrule
	\end{tabular}
	\caption{Choices of the parameters for the random instances.}
	\label{table:random}
\end{table}

Table~\ref{table:random} summarizes all choices of the parameters. Thus, for $k:=(k_1,k_2)$, each combination of $|I|,p,k,d$ defines the structure of an instance for which we create $50$~representatives $I_1(|I|,p,k,d),\ldots, I_{50}(|I|,p,k,d)$. To be able to compare the impact of differing demand ranges or gamma factors, the underlying graph of instance $I_r(|I|,p,k,d)$ equals that of $I_r(|I|,p,k',d')$ for every $r$ and fixed values for $|I|$ and $p$. Furthermore, we are interested in the relative gap between the robust solution value and the worst case solution value, respectively average case solution value, to analyze the extra cost of robustness. For the random instances we choose ten extreme scenarios uniformly at random to compute the average case solution as explained above. Interestingly, the chosen median solution value of each instance~$I_r(|I|,p,k,d)$ is close to $\lceil\nicefrac{\Gamma}{3}\rceil$ so that almost every doctor covers three demand points. Thus, the average case solution value is always very close to the average over the trivial lower bounds $\lceil\nicefrac{\Gamma}{3}\rceil$.

In the following, we present some interesting findings during our analysis of these random instances. Fig.~\ref{fig:gaps} depicts the logarithmic (to the base of $10$) average relative gap between the objective values of the worst case solutions ($wcsv_r$) and the robust solutions ($ rsv_r$), i.~e.,~$\log_{10}\left(\nicefrac{1}{50}\sum_{r=1}^{50} \nicefrac{wcsv_r}{rsv_r}\right)$ (black), as well as the logarithmic average relative gap between the objective values of the average case solutions and the robust solutions (cyan). As a reference point the logarithmic average relative gap between the robust solution and itself is drawn as a dashed horizontal line (magenta). Moreover, the average processing times (in seconds) of \emph{asf} and \emph{setf} are displayed in Fig.~\ref{fig:random_times}, where we average only over the processing times for instances that were actually solved to optimality. The markers in Fig.~\ref{fig:random_times} are given an alpha value determining their transparency where the alpha is computed by dividing the number of solved instances by 50.

\begin{figure}%
\includegraphics[width=\columnwidth]{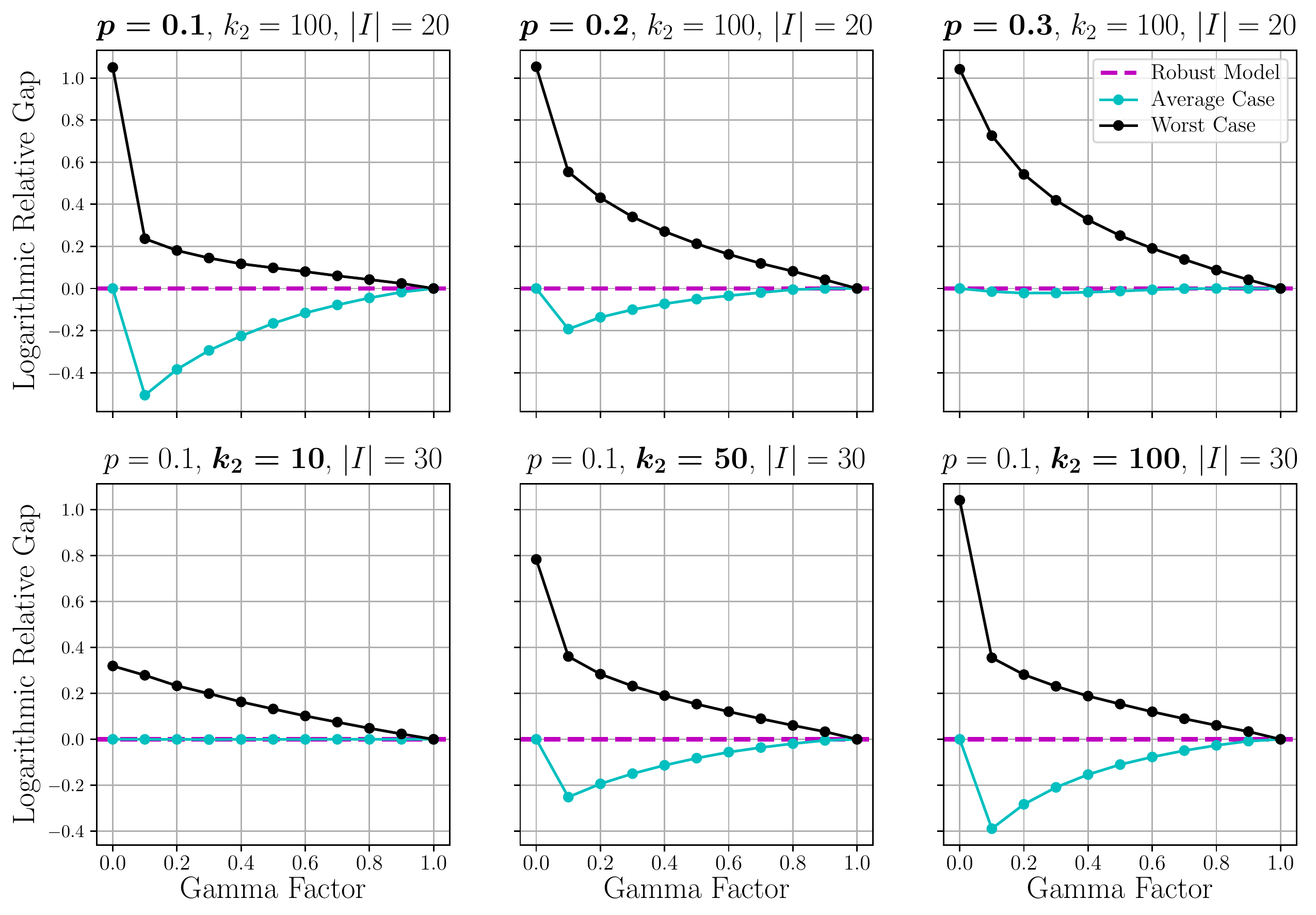}%
\caption{Logarithmic average relative gaps between the worst case and the robust solution as well as between the average case solution and the robust solution for $50$ random instances with $|J|=100$, $k_1=10$ and varying other parameters.}%
\label{fig:gaps}%
\end{figure}

Focusing on Fig.~\ref{fig:gaps} we first observe that for gamma factor $1.0$ or $0.0$ the objective values of the robust solution and the average case solution coincide as in these two cases there only exists one unique extreme scenario. Further, we can see that with rising gamma factor the objective value of the worst case solution gets closer to the objective value of the robust solution which is also expected.

Regarding the upper three plots in Fig.~\ref{fig:gaps} we can see that increasing the density of the graph decreases the relative gap between the average case solution values and the robust solution values
and increases the relative gap between the worst case solution values and the robust solution values. This can be explained by the fact that in dense graphs suppliers have more possibilities to serve clients
and can therefore act more \emph{flexible} than in sparse graphs. An extreme example would be a location adjacent to all regions. We could then simply put all suppliers in that location and get a
solution that is as high as the trivial lower bound. Furthermore, the relative gap between the worst case solution values and the robust solution values becomes larger with increasing density. Looking at the data we can see that the worst case solution and the average case solution do not change too much with increasing density, it is in fact the robust solution that becomes cheaper. We can conclude that in dense graphs
we get robustness almost for free, whereas in sparse graphs we have to pay quite a bit for turning the solution into a robust one. Note that increasing the number of locations while keeping $p$ fixed improves the robust solution in a similar manner.

In the second row of Fig.~\ref{fig:gaps} we see the impact of increasing the range for the upper demand of the regions. With increasing $k_2$ the relative gap between both the worst case solution values and the robust solution values as well as the average case solution values and the robust solution values becomes larger. This behavior does not solely depend on the increased value of $k_2$ but rather on the relative difference of $k_1$ and $k_2$. For example, on instances with $k_1=k_2=100$, $p=0.1$ and $|I|=30$ we can see that the robust solution values and the average solution values coincide in most cases. We conclude that the price of robustness is especially cheap if $k_1\geq k_2$ and the graph is \emph{dense} enough. A reason for this behavior could be that if all regions have high lower bounds on their demand (in comparison to their upper demand)  the number of suppliers positioned in locations adjacent to some fixed region is quite large in any average case solution. Therefore, there are more possibilities for allocating them to the clients.

Overall the price of robustness is very low if the graph is \emph{dense} or the relative gap between $k_1$ and $k_2$ is low. For sparse graphs the price of robustness can become very large. For example in the instances with $|I|=10$ and $p=0.1$ the robust solution value coincides with the worst case solution value quite often. Note that following the random graph construction given above, in all these instances every region is adjacent to exactly one location.

\begin{figure}%
\includegraphics[width=\columnwidth]{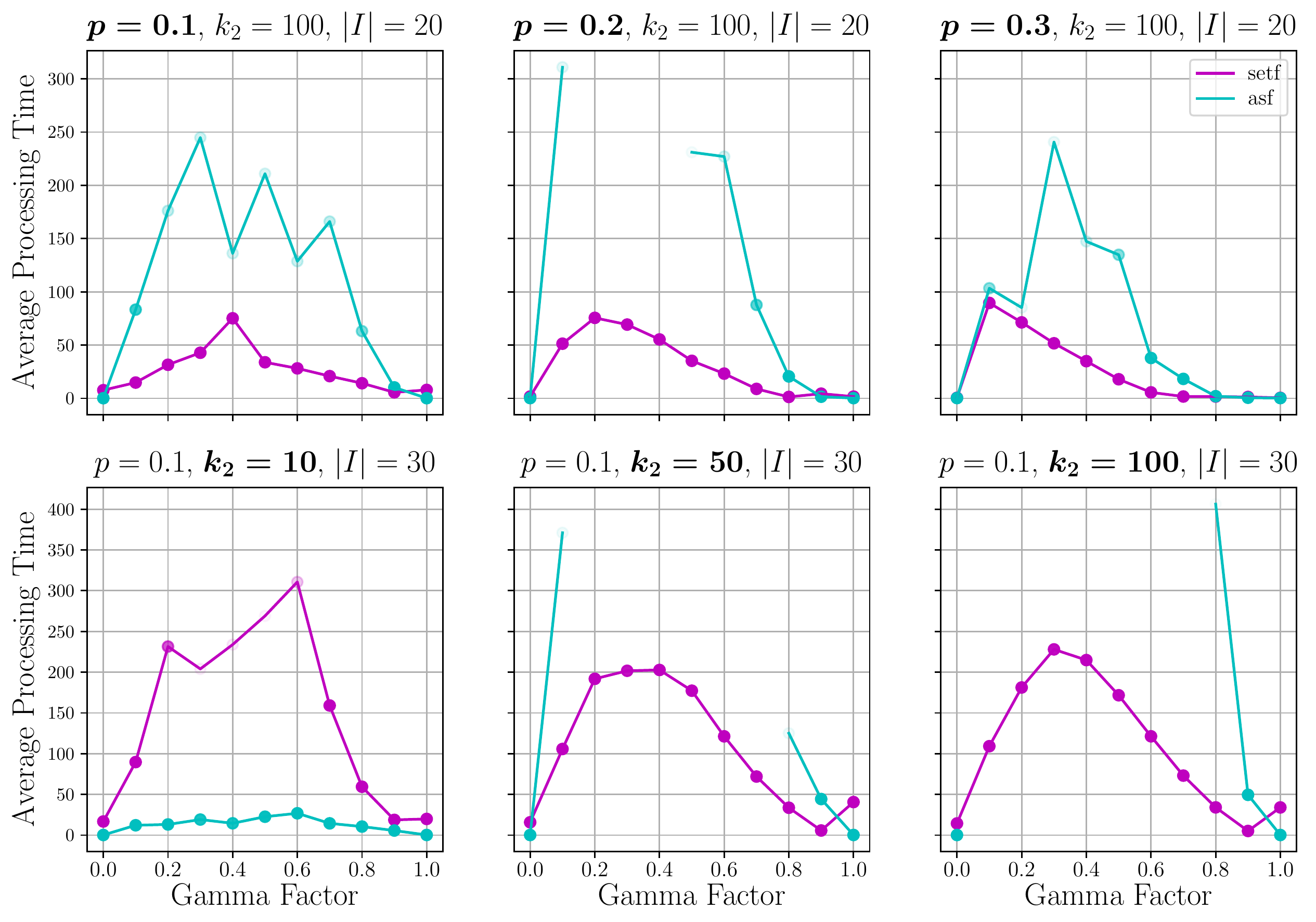}%
\caption{Average processing times in seconds of \emph{asf} and \emph{setf} for at most $50$ random instances with $|J|=100$, $k_1=10$ and
varying other parameters. The transparency of the markers reflect the amount of solved instances.}%
\label{fig:random_times}%
\end{figure}

Fig.~\ref{fig:random_times} depicts the average running times of the two solution approaches for  $50$ instances with $|J|=100$, $k_1=10$ and varying other parameters. If the plot does not 
contain a point for some gamma factor it means the corresponding solution approach did not finish computation in the given time window of two minutes. We can see, that in most cases \emph{setf} seems to be the better choice. Taking a closer look, it becomes clear that \emph{asf} performs especially well when the robust solution value coincides with the average case solution value. In these cases a very small number of extreme scenarios (often even just one) needs to be added in order to get a robust solution. With increasing number of required scenarios the running time for \emph{asf} explodes. The running time of \emph{setf} does not seem to depend on this too much. The only major impact on the running time of \emph{setf} seems to be the number of regions and locations. 

\subsection{Real world example: placing emergency doctors}
\label{sec:real_world}
As a real world application, we regard the problem of placing as few emergency doctors as possible into given facilities such that the emergencies happening in one shift can still be handled in a satisfactory way. In this context, the uncertain demand of each region reflects the unknown number of emergencies happening in that region during the considered shift. Thus, the proposed discrete budgeted uncertainty set allows, for each region, variations in a given interval but the total number of emergencies is bounded. The proposed model seems fitting for the application since all realistic scenarios should be covered equally well.\medskip

Using map data from OpenStreetMap~\cite{osm}, we construct a graph modeling the street network of some fixed part of the map.
In our computations, we considered the street network inside a bounding box enclosed in the federal state Rhineland-Palatinate in Germany. 
The size of the bounding box is roughly $300\text{km}^2$ mostly consisting of rural areas. The GPS coordinates of the south-west corner of the box are
$(7.2606, 49.1703)$ and the GPS coordinates of the north-east corner of the box are $(8.3890, 49.9537)$. Inside the box there are currently 
$38$ emergency facilities, cf.\ \cite{rettungswachen}, which we choose as locations of the instance denoted by the set $I$.

In the street network the edge weight corresponds to the time needed for a doctor to travel 
along this particular edge based on the maximum speed allowed on the associated piece of road.
To obtain the regions, for each street node, we compute a list of locations 
from which the street node can be reached within $15$~min. In 
Rhineland-Palatinate, $15$~min is the time at which the first responder must be present at the emergency 
after he left the facility. Now, we define all street nodes with the same list of 
facilities to be in the same region and denote the set of all regions by $J$. 
With our bounding box this results in a set of $426$ regions and gives a straight forward way to define the graph of the instance:
Simply add edges between each region and all locations of its list.

We further set $a_j=0$ and $b_j=1$ for all $j\in J$. The interpretation of these bounds is that in any 
region there might or might not occur an accident during the regarded shift.
We do not allow more than one emergency in a given region as our regions are rather small and
we therefore deem the case of more than one occurring emergency
during one shift to be unrealistic.
The total number of emergencies $\Gamma$ is then set to different values for the 
tests. 
We assume that an emergency doctor is able to handle up to three emergencies in one shift, 
i.e.,\ $q$ is fixed to three as in the random case above. 
Clearly, this assumption is not exact since emergencies may overlap and one doctor may not 
be able to attend to even two emergencies if they overlap.
Nevertheless, we are convinced that this approach is reasonable since adding uncertainty 
to the value $q$ as well would lead to overlapping uncertainties rapidly increasing the
conservativeness of the solution.

Note that our model does not forbid \emph{local worst cases}: 
For any subset of the regions of size at most $\Gamma$ there is a scenario in which 
the demand of each region in the subset is set to one. Thus, if the regarded part of
the map is too large, implicitly raising the assumed maximum number of emergencies $\Gamma$, the
solution to our model also covers scenarios in which emergencies massively occur in
very small parts of the entire map. Thus, when using \RobustOptShortT\ for this 
application, the regarded size of the map should be reasonable.\medskip

\begin{figure}%
\centering
\includegraphics[width=0.45\columnwidth]{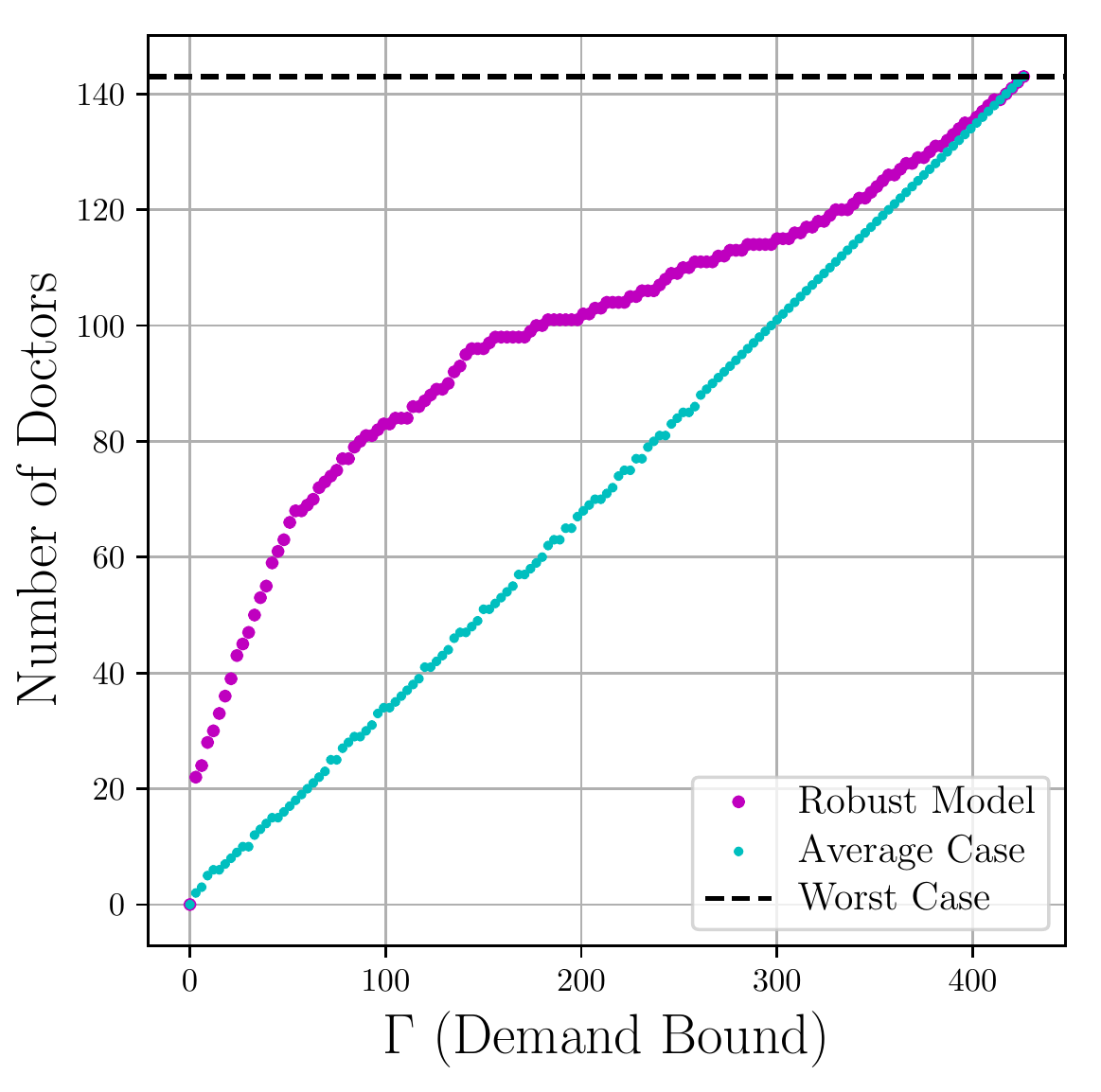}\quad\quad%
\includegraphics[width=0.45\columnwidth]{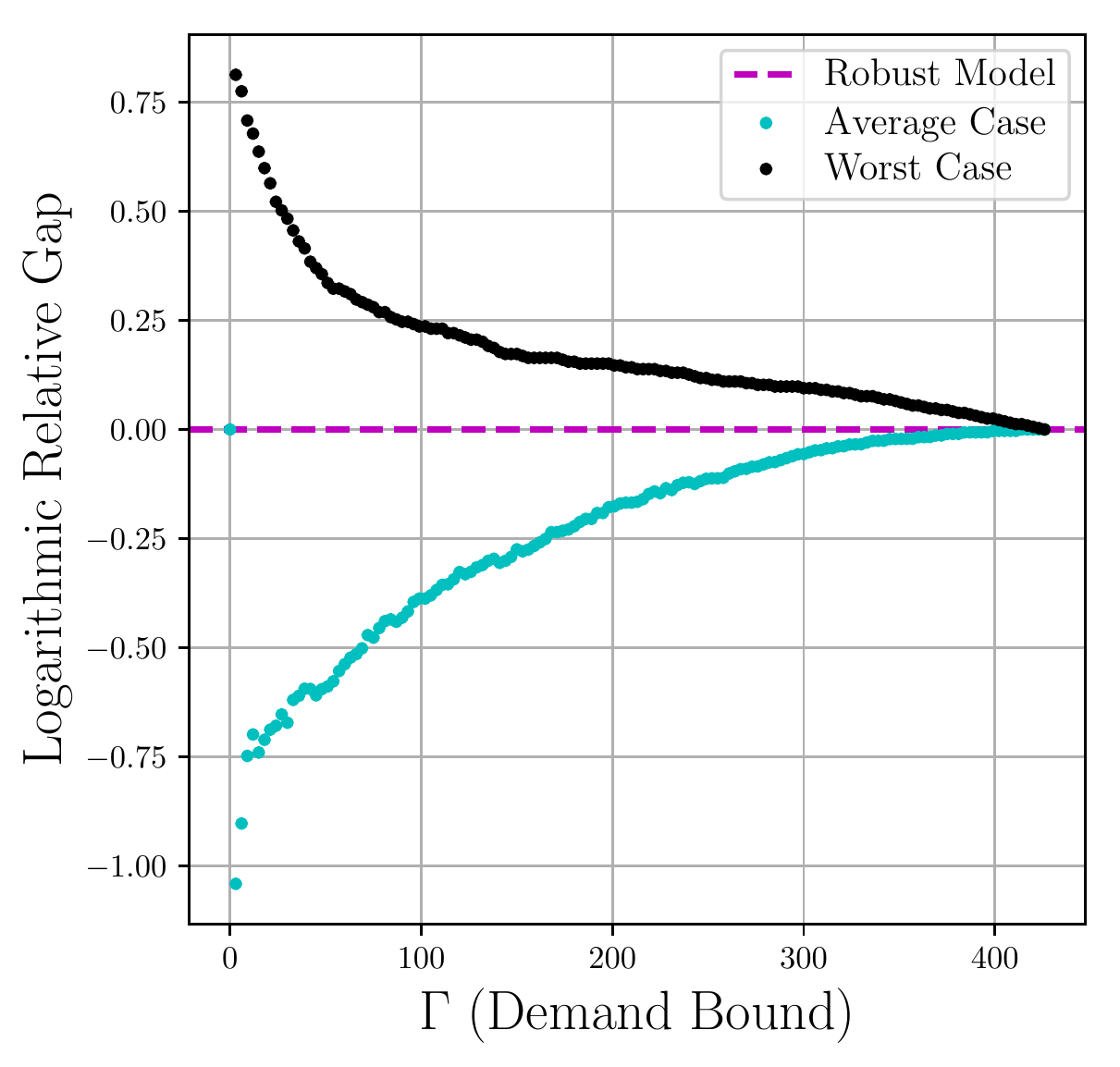}%
\caption{On the left, the number of doctors needed in the real world instances of the robust model, the average case and the worst case. On the right, the logarithmic (to base 10) relative gaps between the average case solution and the robust solution as well as between the worst case solution and the robust solution.}%
\label{fig:robust_vs_worst}%
\end{figure}

Fig.~\ref{fig:robust_vs_worst} shows the comparison of the objective value of the robust model to the 
median objective value of the average case. Due to larger input data compared to the random instances, we 
choose five random extreme scenarios for the average case solution. Fig.~\ref{fig:time_robust} depicts the running times of \emph{setf} and \emph{asf}.
Test runs were made for each ${\Gamma\in\{3i\colon i=0,\dots,142\}}$.

Taking a closer look at Fig.~\ref{fig:robust_vs_worst}, we can see that 
the objective value of the average case solution is always close to the trivial lower bound $\left\lceil\nicefrac{\Gamma}{3}\right\rceil$.
Thus, almost all doctors cover three emergencies in the fixed 
average scenarios. 
The behavior of the robust solution is somehow expected. The greatest absolute deviation from the average 
solution is attained for $\Gamma$ between~$54$ and~$156$, so roughly for $\Gamma$ attaining a 
value between $\nicefrac{1}{10}$ and $\nicefrac{1}{40}$ of the total sum of the demands~$426$. 
The relative distance between the average case solution value and the robust solution value decreases linearly. 
In the regarded map area, fixing the number of emergencies between $20$ and $40$ seems adequate. 
The price of robustness for the application in this area does seem quite low, given the fact that in the average case solutions
not even all regions have to be reachable by some doctor.

The running time of \emph{setf} is acceptable for all regarded demand bounds $\Gamma$. On the other hand, \emph{asf} was not able to 
solve all instances within the time limit of $15$~min. Furthermore, the variance in the running time is much higher than for \emph{setf}. 
Interestingly, in some cases \emph{asf} outperforms \emph{setf} significantly. Especially in the area where $\Gamma$ is around $150$. Thus, it 
seems worthwhile to use both approaches in practice parallelly. If one is looking for only one solution approach \emph{setf} should be 
preferred because it seems more reliable
as the variance in the running times is smaller.
Surprisingly, for random instances created as described in the previous section with parameters similar to this application ($|I|=38$, $|J|=426$, $a_j=0$, $b_j=1$, $p=0.07$), the running times of both solution approaches increase rapidly, where the given $p$ roughly models the density of the graph arising from our application. For example, \emph{setf} took roughly $20$~h of computation for solving instances with $\Gamma=45$. For larger $\Gamma$, it did not finish solving the problem after $48$~h. \emph{asf} was not able to finish any instance within the time limit of 48~hours. We think that 
this behavior is due to the \emph{planar-like} structure of the graph arising from
the application. For future research, it might be interesting to 
work on complexity results for instances with these planar-like structures.

We conclude that for map areas of the tested size 
\RobustOptShortT\ is of interest for the given practical application. Though, we think that for a 
larger regarded map area, a model including 
some local condition on the scenarios to prevent the described massive occurrences of emergencies
in comparably small map areas might suit the application even better. This could be a direction
of future research on the topic.

\begin{figure}%
\centering
\includegraphics[width=\columnwidth]{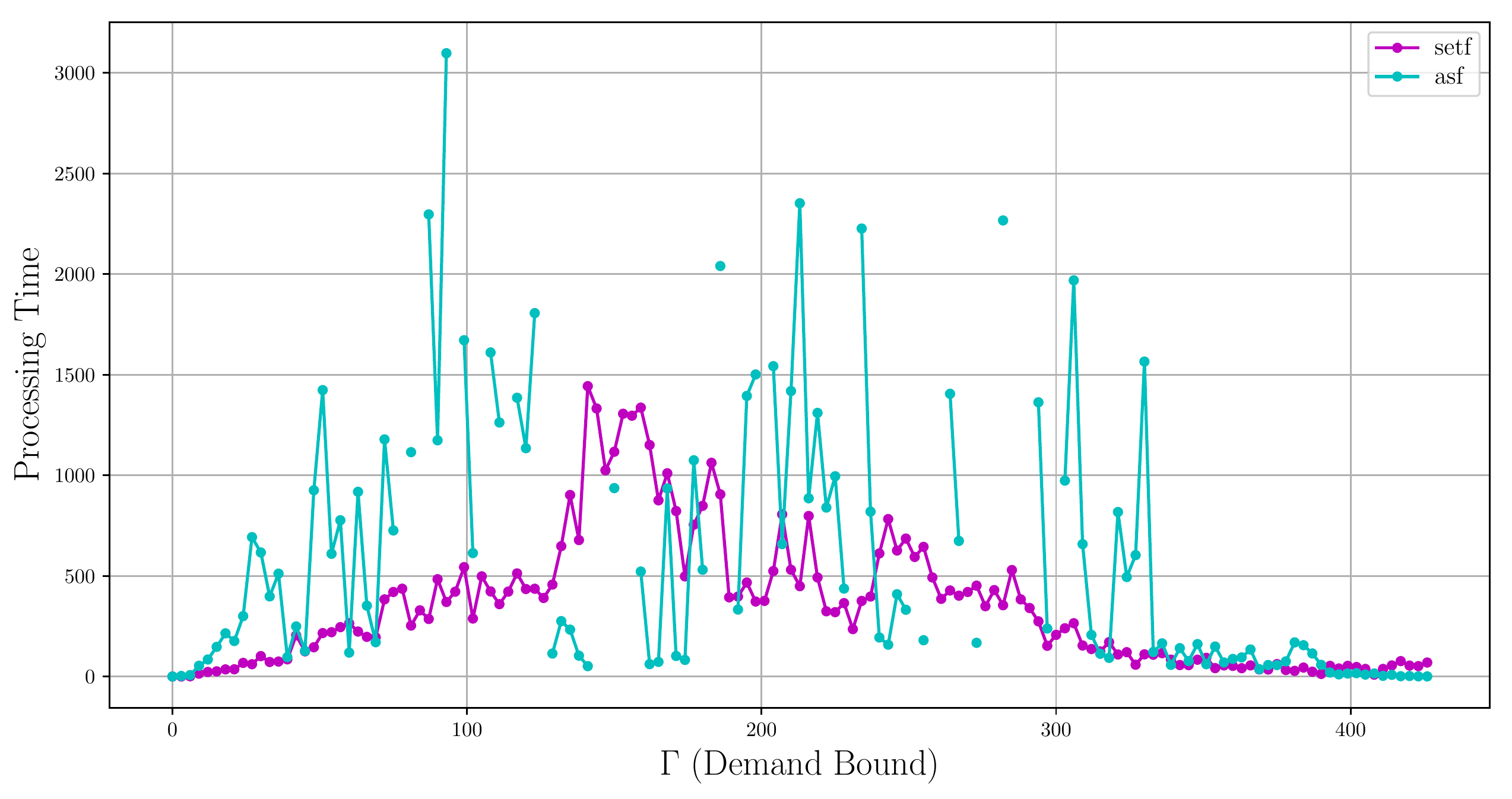}%
\caption{Processing times in seconds of \emph{setf} and \emph{asf} for different demand bounds.}%
\label{fig:time_robust}%
\end{figure}



\section{Conclusion}
\label{sec: Conclusion}
We have presented a novel problem called \nonRobust\ which we have identified to be a special case of \emph{Multiset Multicover}.  
We have shown that it is NP-complete for fixed values of~$q\in\mathbb{N}$ with $q\geq 3$ but polynomial time solvable for $q=1,2$. 
The main focus of this paper was the robust version of \nonRobust, which we proved to be strongly NP-hard for all $q\in\mathbb{N}$.
Further we have given two different integer programming formulations of the problem and discussed their up- and downsides. 
We presented strategies for solving the problems based on constraint generation.  
Our computational results based on random instances and instances corresponding to a real world application are quite promising.  They show that the model and the robust approach can be of great use for practical problems since it is able to hedge against uncertainty with fewer resources compared to an all worst-case approach.


\section{Acknowledgments}
\label{sec: Acknowledgements}
Map data copyrighted OpenStreetMap contributors and available from \\
\texttt{https://www.openstreetmap.org}.

\noindent This work was partially supported by the German Federal Ministry of Education and Research within the project ''HealthFaCT - Health: Facility Location, Covering and Transport''.

\noindent We thank the anonymous referees for their careful reading of our manuscript and their many insightful comments and suggestions.

\bibliographystyle{plain}

\clearpage
\appendix

\section{Complexity Results for \nonRobustShort}
\label{appendix: complexity}

\noindent At this point we will give formal proofs for the
statements concerning the complexity of \nonRobustOptShort\
in Section~\ref{sec: Problem Definition and Classifications}.

\begin{observation}
	\emph{Min-$1$-Multiset Multicover} is solvable in linear time.
\end{observation}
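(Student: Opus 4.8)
The plan is to show that, for $q=1$, the optimal objective value of Min-$1$-MSMC equals the total demand $D := \sum_{j\in J} d_j$ whenever the instance admits a feasible solution, and that both feasibility and this value can be decided by a single scan of the input.

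First I would determine feasibility. Since with $q=1$ each supplier covers at most one unit of demand, a region $j$ can be served only if $N(j)\neq\emptyset$, whereas a region with $d_j = 0$ imposes no constraint at all. I would therefore traverse the regions once and report infeasibility precisely when some $j$ has $d_j > 0$ and $N(j) = \emptyset$. This test runs in time linear in the size of the input.

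For the objective I would establish matching bounds. The lower bound follows from the set formulation (IP~\ref{ip:subset})($d$), which by Lemma~\ref{lemma:ipequivalence} describes exactly the feasible vectors $x$: taking $S = J$ and $q=1$ yields $\sum_{i\in I} x_i \geq \sum_{i\in N(J)} x_i \geq \sum_{j\in J} d_j = D$ for every feasible $x$. For the upper bound, assuming feasibility, I would exhibit a solution attaining $D$: for each region $j$ pick an arbitrary neighbor $i(j)\in N(j)$, set $y_{i(j)j} = d_j$, set all remaining $y$-variables to zero, and put $x_i = \sum_{j\,:\,i(j)=i} d_j$. Then the demand constraint at each $j$ holds with equality, the capacity constraint at each $i$ reads $\sum_{j\in N(i)} y_{ij} = x_i = q\cdot x_i$, and $\sum_{i\in I} x_i = \sum_{j\in J} d_j = D$, with all variables integral.

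Finally, I would note that computing $D$ and carrying out this construction both amount to a single traversal of the regions together with their incident edges, so the overall procedure is linear. Since the statement is a mere observation, there is no deep obstacle; the step I would treat most carefully is the interface between the two parts, namely that the construction must be invoked only \emph{after} the neighbor check has certified feasibility, as otherwise $i(j)$ would be undefined for an uncoverable region. This is also the only place where the claim could fail if phrased carelessly.
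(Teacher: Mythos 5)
Your proof is correct and follows essentially the same route as the paper's: both establish that the optimum equals $\sum_{j\in J} d_j$ via the trivial capacity lower bound (each supplier serves one client when $q=1$) and the greedy construction assigning each region's demand to an arbitrary neighbor, all in one linear pass. Your version is in fact slightly more careful than the paper's, which implicitly assumes feasibility and omits the check for a region with $d_j>0$ and $N(j)=\emptyset$ that you rightly flag.
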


\begin{proof}
	If $q=1$, given an instance for \nonRobustOptShort, in any solution
	each client needs to be assigned a unique supplier.  This means, for
	each client in some region $j\in J$ we may put a single supplier in
	some location $i\in N(j)$.  This will yield a feasible solution with
	$\sum_{i\in I}d_i$ suppliers, which is clearly optimal.  We can find
	this solution in time linear in~$|I|+|J|$.
\end{proof}

For $q=2$ we can still solve \nonRobustOptShort\ in polynomial time.
We show how to compute an optimal solution using an algorithm
for the \emph{Edge Cover} problem.  Recall that for a graph $G=(V, E)$
an \emph{edge cover} is a subset of the edges $E^\prime\subseteq E$,
such that each vertex $v\in V$ is incident to at least one edge
$e\in E^\prime$.  Regard the following procedure:  For a given
instance of the problem, duplicate each region $j\in J$ exactly $d_j$
times yielding a set $V_j$ for each $j\in J$.  We see that we may
bound any $d_j$ by~$|I|$ in the proof of Theorem~\ref{thm:q2}. Regard the graph $G=(V,E)$ with vertex set
$V=\bigcup_{j\in J} V_j$ where the edge $(u,v)$ for $u\in V_{j_1}$,
$v\in V_{j_2}$ are in $E$ if
$N(j_1)\cap N(j_2)\neq\emptyset$. Note that this implies that the 
graph induced by some set $V_j$ is the complete graph.
Next compute a minimum edge cover
$E'$ in $G$ and initially set $x_i=0$ for all $i\in I$.  
For each edge $(u,v)\in E'$, with $u\in V_{j_1}$,
$v\in V_{j_2}$ we increase $x_i$ by $1$ for some 
$i \in N(j_1)\cap N(j_2)$, meaning we add a supplier in location $i$ who covers one demand point in region~$j_1$ and one in region~$j_2$.

\begin{theorem}
	\label{app:thm:q2}
	The above procedure solves \emph{Min-$2$-Multiset Multicover} and
	can be implemented to run in time
	$O(|I|^{5/2}|J|^{5/2})$.
\end{theorem}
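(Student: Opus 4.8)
The plan is to show that the stated procedure computes an optimal solution by proving that, after a suitable normalisation of the demands, \emph{Min-$2$-Multiset Multicover} is equivalent to the \emph{Minimum Edge Cover} problem on the graph $G$ constructed above, and then to bound the running time through the cost of a maximum matching computation.

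First I would argue correctness of the reduction. Recall that for $q=2$ every supplier placed in a location $i$ serves at most two demand points, and by Observation~\ref{obs: interpretation y} we may assume this assignment to be integral. If a single supplier serves one point of region $j_1$ and one point of region $j_2$, then $i\in N(j_1)\cap N(j_2)$, which is exactly the condition under which the corresponding vertices of $V_{j_1}$ and $V_{j_2}$ are joined by an edge of $G$ (for $j_1=j_2$ this uses that $V_j$ induces a clique whenever $N(j)\neq\emptyset$, which feasibility guarantees for every region with $d_j\ge 1$). Thus every feasible selection of suppliers induces a set of edges covering all vertices of $G$: a supplier serving two points contributes the corresponding edge, a supplier serving a single point $u$ contributes an arbitrary edge incident to $u$, and demand points served several times are simply over-covered. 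Conversely, each edge $(u,v)\in E'$ of an edge cover is realised by one supplier in a common location of the two incident regions, which is precisely what the procedure does; since $E'$ covers every vertex, every region $j$ is covered at least $|V_j|=d_j$ times, so the produced vector $x$ is feasible and uses exactly $|E'|$ suppliers. Taking $E'$ to be a \emph{minimum} edge cover therefore shows that the optimum of \emph{Min-$2$-Multiset Multicover} equals the minimum edge-cover size of $G$. The only caveat is an isolated vertex of $G$, corresponding to a region with $d_j=1$ sharing no location with any other region; such a point must receive its own dedicated supplier, so we delete these vertices beforehand and account one supplier each.

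Next I would bound the demands to keep $G$ polynomial in size, since the $d_j$ are encoded in binary. The key exchange argument is this: in any optimal solution, if two suppliers each serve one point of region $j$ together with a point outside $j$ and both sit in the \emph{same} location $i\in N(j)$, then those two outside points also share the location $i$ and can be reassigned to one another, while the two points of $j$ are paired internally; this does not increase the number of suppliers. After exhaustively applying this swap (and merging any two single-point suppliers of the same region), the suppliers touching region $j$ that do not pair two of its points internally all lie in distinct locations of $N(j)$, so there are at most $|N(j)|+1\le |I|+1$ of them. Hence at least $d_j-|I|-1$ points of region $j$ are covered by internal pairs, and whenever $d_j$ exceeds this threshold we may delete one such internal pair, lowering $d_j$ by two and the optimum by exactly one; re-adding these internal suppliers at the end reconstructs a solution of the original instance. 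Iterating shows we may assume $d_j=O(|I|)$ for every region, whence $|V(G)|=\sum_j d_j = O(|I||J|)$.

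Finally I would assemble the running time. With $|V(G)|=O(|I||J|)$ and $|E(G)|=O(|V(G)|^2)=O(|I|^2|J|^2)$, a maximum matching of the general graph $G$ can be computed in $O(\sqrt{|V(G)|}\,|E(G)|)=O((|I||J|)^{1/2}(|I||J|)^{2})=O(|I|^{5/2}|J|^{5/2})$ time, e.g.\ via~\cite{Goldberg+Karzanov:flow}. By Gallai's identity a minimum edge cover of a graph without isolated vertices has size $|V(G)|-\nu(G)$ and is obtained from a maximum matching by adding, for each exposed vertex, one incident edge (see~\cite{Lovasz:matching,Schrijver:book:new}); this post-processing and the final translation of edges into suppliers cost $O(|E(G)|)$. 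All remaining steps---dropping empty regions, the demand reduction, building $G$, and the isolated-vertex handling---are dominated by this bound, giving the claimed $O(|I|^{5/2}|J|^{5/2})$ total. The step I expect to be most delicate is the demand-reduction argument, where one must verify that the local swap never increases the supplier count and that deleting an internal pair lowers the optimum by exactly one; the correctness of the edge-cover reduction and the matching-based time bound are then routine.
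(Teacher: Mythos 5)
Your proposal is correct and takes essentially the same route as the paper: the identical reduction of \emph{Min-$2$-Multiset Multicover} to minimum edge cover on the clique-expanded graph $G$, a local exchange argument capping $d_j$ at $O(|I|)$ (the paper gets this slightly more directly by pigeonholing on the $y_{ij}$ at a single location and exploiting that assignments within a location are fungible, but the effect is the same), and the same running-time assembly via maximum matching~\cite{Goldberg+Karzanov:flow} followed by Gallai-style augmentation to an edge cover. Your explicit handling of isolated vertices of $G$ (regions with $d_j=1$ sharing no location with any other region) and of suppliers serving only one client is a genuine corner case that the paper's proof leaves implicit, so this extra care is a refinement of, not a departure from, the published argument.
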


\begin{proof}
	We first prove the correctness of the procedure.  Let $G=(V,E)$ be
	the graph defined in the procedure.  Let $x$ be as defined by the
	procedure above and let $E^\prime$ be the minimum
	edge cover from the procedure.  As we have a node in $G$ for every
	client and the nodes corresponding to the clients are covered by the
	edges in $E^\prime$ it is clear that $x$ defines a feasible solution
	for \nonRobustOptShort.  It remains to show, that given a solution
	$x$ to \nonRobustOptShort, there is an edge cover with
	$\sum_{i\in I}x_i$ edges.  By the equivalence of (IP
	\ref{ip:subset}) and (MIP~\ref{ip:allocation}) we can find
	$y_{ij}\in\mathbb{Z}_{\geq 0}$ for all $i\in I$, $j\in J$ fulfilling
	\begin{align*}
	\sum_{i\in N(j)}y_{ij}\geq d_j~\forall j\in~J\text{ and }\sum_{j\in N(i)}y_{ij}\leq 2x_i~\forall i\in~I.
	\end{align*}
	Clearly, we may assume equality in the second set of equations and can
	thereby determine for each supplier the two clients he serves. We initially set 
	$E^\prime$ to the empty set. If,
	for each supplier, we now select the edge between the two clients he
	serves and add it to $E^\prime$, we get an edge cover of $G$ with
	$\sum_{i\in I}x_i$ edges.  This proves the correctness of the
	procedure.
	
	To see the running time, first note that we may bound the number of
	clients $d_j$ in any region by the number of suppliers: Assume
	$d_j\geq |I|+1$ for some $j\in J$ in some instance.  Then, in any
	solution of (MIP~\ref{ip:allocation}) there is some $i\in N(j)$ such
	that $y_{ij}\geq 2$.  
	Thus, given an optimal solution, choose $i$ such that
	$y_{ij}\geq 2$. Removing one supplier from $i$ now yields an optimal 
	solution to the same instance with the demand of region $j$ being $d_j - 2$.
	As a consequence we may also solve this instance and then afterwards add 
	a supplier to any location connected to $j$ to get an optimal solution
	of the original problem. We can therefore decrease the demands of all $j$
	with $d_j\geq |I|+1$ to $|I|$, respectively
	$|I|-1$ by adding $\left\lceil\nicefrac{1}{2}\left(d_j - |I|\right)\right\rceil$ doctors to any location connected to $j$. This can be done
	in constant time for any region $j\in J$. With this observation, it can
	readily be seen that the constructed graph has at most $N:=|I|\cdot |J|$
	vertices whereas the number $M$ of edges is upper bounded by
	$O(|I|^2 |J|^2)$. A minimum edge cover in a graph with $N$~vertices and
	$M$~edges can be obtained by first solving a maximum matching problem
	in time $O(\sqrt{N}M\log_N (N^2/M))$~\cite{Goldberg+Karzanov:flow} and
	then using $O(M)$ time to augment the
	matching~\cite{Schrijver:book:new,Lovasz:matching,Garey}.   This gives the claimed running
	time.
\end{proof}

It is fairly easy to see that \nonRobust\ is a generalization of
\emph{Set Cover by $q$-sets}, i.e., the restriction of \emph{Set
	Cover} where all sets are of size exactly~$q$.  Since this is an
NP-complete problem, the next result is not surprising.  For the sake
of completeness we will nevertheless give a formal proof.

\begin{theorem}
	\label{app:the:msmqc_npcomplete}
	For any fixed $q\geq 3$, \nonRobust\ is NP-complete in the strong sense.
\end{theorem}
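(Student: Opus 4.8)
The plan is to prove both directions of NP-completeness separately: first that \nonRobust\ lies in NP, and then that it is NP-hard via a reduction from \emph{Set Cover by $q$-sets}, which is strongly NP-complete for every fixed $q\geq 3$ (cf.~\cite{Garey}). For membership in NP, I would take the vector $x=(x_A)_{A\in\mathcal{J}}$ (written in binary) as certificate. The condition $\sum_{A\in\mathcal{J}}x_A\leq B$ is checked directly, and the existence of a compatible assignment $y$ is decided by a single maximum-flow computation in the network $H$ from the proof of Lemma~\ref{lemma:ipequivalence}, with source capacities $q\cdot x_A$ and sink capacities $d_j$: by that lemma a feasible $y$ exists if and only if the maximum $s$-$t$-flow has value $\sum_{j\in J}d_j$. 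Since this verification is polynomial, \nonRobust\ is in NP.

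For the hardness direction I would start from an instance of Set Cover by $q$-sets, given by a universe $U$, a collection $\mathcal{S}\subseteq 2^U$ of subsets each of size exactly $q$, and an integer $k$, and construct the \nonRobustShort\ instance (in the form of Problem~\ref{prob: MSMC}) with ground set $J=U$, collection $\mathcal{J}=\mathcal{S}$, unit demands $d_j=1$ for all $j\in J$, bound $B=k$, and the same fixed $q$. The observation that drives both directions is that, because each $S\in\mathcal{S}$ has exactly $q$ elements, a single copy of $S$ (that is, $x_S=1$) can simultaneously serve one unit of demand of each of its elements, as then $\sum_{j\in S}y_{Sj}=q=q\cdot x_S$.

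The equivalence is then routine. Given a cover $\mathcal{S}'$ with $|\mathcal{S}'|\leq k$, set $x_S=1$ for $S\in\mathcal{S}'$ and $x_S=0$ otherwise, and $y_{Sj}=1$ for all $j\in S$, $S\in\mathcal{S}'$; then $\sum_{S\ni j}y_{Sj}\geq 1=d_j$ since $\mathcal{S}'$ covers $j$, the capacity constraints hold with equality, and $\sum_S x_S\leq k=B$. Conversely, from a feasible $(x,y)$ with $\sum_S x_S\leq B$ let $\mathcal{S}'=\{S:x_S\geq 1\}$, so $|\mathcal{S}'|\leq\sum_S x_S\leq k$. For each $j$ the demand constraint forces some $y_{Sj}>0$ with $j\in S$, and the capacity constraint $0<y_{Sj}\leq\sum_{j'\in S}y_{Sj'}\leq q\cdot x_S$ forces $x_S\geq 1$, i.e.\ $S\in\mathcal{S}'$ and $j\in S$; hence $\mathcal{S}'$ is a cover of size at most $k$. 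Since the reduction is polynomial and introduces only numbers bounded by a polynomial in the input size ($d_j=1$ and $B=k\leq|\mathcal{S}|$), the strong NP-completeness of Set Cover by $q$-sets transfers to \nonRobust.

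I do not expect a serious obstacle here, as the reduction is direct; the two points that need care are ruling out, in the reverse direction, that an element is ``covered'' only through a set $S$ with $x_S=0$ (impossible, since $y_{Sj}>0$ requires positive capacity $q\cdot x_S$), and phrasing NP-membership cleanly even when $B$, and hence the entries of $x$, are large in binary — which the flow-feasibility certificate handles.
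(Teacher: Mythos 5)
Your proof is correct, but the hardness half takes a genuinely different route from the paper. The NP-membership argument is the same: the paper likewise uses $x$ as certificate and decides the existence of a compatible $y$ via one max-flow computation through Lemma~\ref{lemma:ipequivalence}. For hardness, however, the paper reduces from \emph{Exact Cover by $3$-sets} (the problem actually labelled [SP2] in Garey--Johnson), not from \emph{Set Cover by $q$-sets}: it builds the instance with $|N(i_S)|=3$ for every set, $d_j=1$ and $B=\nicefrac{|X|}{3}$, and the reverse direction rests on a counting argument --- $B$ copies, each able to usefully cover at most its $3$ neighbours, must cover all $|X|$ unit demands, forcing the chosen sets to be pairwise disjoint, i.e.\ an exact cover. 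A notable feature of that construction is that one single reduction works uniformly for every fixed $q\geq 3$, since the neighbourhood size $3$ caps each set's useful capacity regardless of how large $q$ is. Your reduction instead makes the equivalence completely routine (a cover maps to a cover, with no exactness or counting argument needed), but it shifts the burden onto the premise that \emph{Set Cover by sets of size exactly $q$} is strongly NP-complete for \emph{every} fixed $q\geq 3$. That premise is true and is even asserted with a citation in the paper's introduction, but strictly speaking it requires its own (folklore) padding argument for $q>3$, whereas the paper's choice of source problem avoids this dependence; for $q=3$ the two reductions are essentially the same construction viewed from opposite ends. Your handling of the two delicate points --- that $y_{Sj}>0$ forces $x_S\geq 1$, and that the certificate works even when $B$ is large in binary --- is exactly right and matches the paper's reasoning.
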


\begin{proof}
	Let $q\in\mathbb{N}$ with $q\geq 3$.  As a consequence of Lemma
	\ref{lemma:ipequivalence}, for a given instance of \nonRobustShort,
	we may test if a given solution $x$ is feasible by one Max-Flow
	computation.  Therefore, \nonRobustShort\ is contained in~NP.
	
	To see that the problem is NP-hard in the strong sense we illustrate
	a reduction from \emph{Exact Cover by $3$-sets}, which is known to
	be NP-hard in the strong sense, cf. \cite{Garey}.
	Let $X$ be a set and $\mathcal{S}$ be a collection of subsets of $X$
	where $|S|=3$ for all $S\in\mathcal{S}$.  We create an instance of
	\nonRobustShort\ in the following way.  Due to legibility, assume
	the subsets $S\in\mathcal{S}$ have unique indices $i_S$.  Let
	$I:=~\{i_S\colon~S\in\mathcal{S}\}$, $J:=X$ and define the graph of
	the instance via $N(i_S)=S$ for all $S\in\mathcal{S}$.  Further, let
	$d_j=1$ for all $j\in X$ and $B=\nicefrac{|X|}{3}$.  Now, let
	$\mathcal{S}'$ be a solution to the instance of \emph{Exact Cover by
		$3$-sets}.  Clearly, setting $x_S$ to one if and only if
	$S\in\mathcal{S}'$ and zero else yields a feasible solution to
	\nonRobustShort\ with $\sum_{S\in \mathcal{S}}x_{i_S}=B$.  On the other hand, note
	that in any solution $x$ to \nonRobustShort\ $x_{i_S}\leq 1$ for all
	$S\in \mathcal{S}$.  Furthermore, since $|N(i_S)|=3$ the actual value of $q$
	is of no further interest as long as $q\geq 3$.  Thus,
	$\mathcal{S}'=\{S\colon x_S > 0\}$ is a solution to \emph{Exact
		Cover by $3$-sets}.
\end{proof}

Remark~\ref{rem:multiset_multicover} reveals \nonRobustShort\
to be a special case of
\emph{Multiset Multicover}.
It is well known that \emph{Multiset
	Multicover} can be approximated within a factor of $\log(s)$ where
$s$ is the size of the largest multiset of an instance, see
e.g.~\cite{Hochbaum:book,Lovasz:cover,Chvatal:cover}. If we regard
\nonRobustOptShort\ as \emph{Multiset Multicover} problem, all
multisets have fixed size~$q$.  We therefore automatically get a
$\log(q)$ approximation for \nonRobustOptShort:

\begin{observation}
	There is a $\log(q)$ approximation for \nonRobustOpt.
\end{observation}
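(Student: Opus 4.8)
The plan is to exploit the reduction of \nonRobustShort\ to \emph{Multiset Multicover} recorded in Remark~\ref{rem:multiset_multicover} and then invoke the classical greedy guarantee for \emph{Multiset Multicover}. First I would build, from a given instance of \nonRobustOptShort, an equivalent \emph{Multiset Multicover} instance: for every location $i\in I$ (equivalently, every subset $A\in\mathcal{J}$) I replace $i$ by the collection of all multisets of cardinality exactly $q$ whose support is contained in $N(i)$, keeping the demands $d_j$ unchanged. Since $q$ is a fixed constant and not part of the input, the number of such multisets per location is $\binom{|N(i)|+q-1}{q}=O(|N(i)|^q)$, so the whole construction inflates the input size by only a polynomial factor.

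Next I would verify that the two instances share the same optimum, and — crucially — that the correspondence preserves the objective value in both directions. In one direction, a single copy of a multiset $M$ of cardinality $q$ covers element $j$ exactly $M(j)$ times with $\sum_j M(j)=q$, which is precisely the freedom encoded by the assignment variables $y_{ij}$ subject to $\sum_{j} y_{ij}\le q$ for one copy of location $i$. Hence, appealing to Observation~\ref{obs: interpretation y} to assume the $y_{ij}$ integral, any \nonRobustShort\ solution using $\sum_i x_i$ suppliers can be realized by selecting $\sum_i x_i$ multisets, and conversely any selection of multisets maps back to a \nonRobustShort\ solution of the same cost. Consequently the optima of the two instances coincide.

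Finally I would apply the well-known greedy $\log(s)$-approximation for \emph{Multiset Multicover}, where $s$ denotes the cardinality of the largest multiset in the instance, cf.~\cite{Hochbaum:book,Lovasz:cover,Chvatal:cover}. By construction every multiset has cardinality exactly $q$, so $s=q$ and the algorithm returns a solution within a factor $\log(q)$ of the \emph{Multiset Multicover} optimum. Translating this solution back through the correspondence above yields a \nonRobustShort\ solution whose cost is within $\log(q)$ of the \nonRobustOptShort\ optimum, which is the claim.

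The step I expect to require the most care is this equivalence of optima. One must check that the cardinality-$q$ multisets faithfully capture the ``up to $q$'' coverage of a single supplier: since over-covering never hurts in a multicover, one pads each multiset with arbitrary support elements of $N(i)$ up to cardinality exactly $q$, turning $\sum_{j} y_{ij}\le q x_i$ into an equality without changing the objective. One should also label each multiset by the location $i$ from which it was generated, so that a multiset whose support lies in several neighborhoods is not counted twice; with this bookkeeping the number of selected multisets equals $\sum_i x_i$ in both directions. Everything else — the polynomial bound on the instance size and the invocation of the standard greedy guarantee — is routine.
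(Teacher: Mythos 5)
Your proposal is correct and follows essentially the same route as the paper: Remark~\ref{rem:multiset_multicover} replaces each location by all cardinality-$q$ multisets over its neighborhood (a polynomial blowup since $q$ is fixed), and the classical greedy $\log(s)$-guarantee for \emph{Multiset Multicover} with $s=q$ then yields the claim. Your additional care about the equivalence of optima — padding to cardinality exactly $q$ and labelling multisets by their generating location — is a worthwhile elaboration of details the paper leaves implicit, but not a different argument.
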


\end{document}